\documentclass[a4paper]{scrartcl}
\usepackage[utf8]{inputenc}
\usepackage[T1]{fontenc}
\usepackage[english]{babel}
\usepackage{amsmath,amssymb,amsfonts,siunitx,commath}
\usepackage{amsthm}
\usepackage{mathrsfs}
\usepackage{tikz,url}
\usepackage{geometry}
\usepackage{mathtools}
\mathtoolsset{showonlyrefs}
\usepackage{subcaption}
\usepackage{algorithm}
\usepackage{algpseudocode}
\usepackage{hyperref}
\usepackage{enumerate}
\usepackage{listings}

\newcolumntype{L}{D{.}{.}{2,5}}
\theoremstyle{plain}
\newtheorem{thm}{Theorem}[section]
\newtheorem{lemma}[thm]{Lemma}
\newtheorem{remark}[thm]{Remark}

\newtheorem{corollary}[thm]{Corollary}
\newtheorem{proposition}[thm]{Proposition}

\newcommand{\R}{\mathbb{R}}

\DeclareMathOperator{\Id}{Id}
\DeclareMathOperator{\Fix}{Fix}

\DeclareMathOperator{\CAT}{CAT}
\DeclareMathOperator{\argmin}{argmin}

\DeclareMathOperator{\supp}{supp}
\theoremstyle{definition}

\usepackage{cite}
 \usepackage{hyperref}
 \hypersetup{
    colorlinks=true,
    linkcolor=black,
    filecolor=magenta,      
    urlcolor=cyan}

\author{Arian B\"erd\"ellima\footnotemark[1]\footnote{
		Institute of Mathematics,
		Technische Universit\"at Berlin,
		Stra{\ss}e des 17. Juni 136, 
		D-10623 Berlin, Germany,
		berdellima, steidl@math.tu-berlin.de.
	} 
\and
Gabriele Steidl\footnotemark[1]
}
\title{Quasi $\alpha$-Firmly Nonexpansive Mappings in Wasserstein Spaces}
\date{\today}

\begin{document}
\maketitle

\begin{abstract}	
	This paper  introduces the concept of quasi $\alpha$-firmly nonexpansive mappings
	in Wasserstein spaces over $\R^d$ and analyzes properties of these mappings.
	We prove that for quasi $\alpha$-firmly nonexpansive mappings 
	satisfying a certain quadratic growth condition,
	the fixed point iterations converge in the narrow topology. 
	As a byproduct, we will get the known convergence of the proximal point algorithm in Wasserstein spaces.
	We apply our results to show for the first time that cyclic proximal point algorithms
	for minimizing the sum of certain functionals on Wasserstein spaces converge
	under appropriate assumptions.	
\end{abstract}

\section{Introduction}
Splitting algorithms which include proximal operators 
have recently found broad interest both in Hilbert spaces \cite{BC11} 
and nonlinear CAT(0) spaces \cite{Bacak}.
For certain applications in finite dimensional linear spaces we refer to the overview papers \cite{BPCPE11},
and in Hadamard manifolds to \cite{BacBerSteWei16,BerPerSte16}.
On the other hand, Wasserstein spaces and Wasserstein proximal mappings
are popular in connection with gradient flows \cite{S2017}.

In this paper, we introduce the concept of quasi $\alpha$-firmly nonexpansive mappings in Wasserstein-2 spaces over $\R^d$. For linear spaces such operators were examined in various papers, see, e.g.  \cite{LukTamTha18, Berd-Steidl, BC11}.
The main motivation for studying (quasi) $\alpha$-firmly nonexpansive operators in linear spaces, particularly in Hilbert spaces, is their connection with the so-called averaged operators which are essential in fixed point theory, see, e.g. 
the classical works \cite{Bruck, Browder, Krasnoselski, Mann}. 
In the context of nonlinear CAT(0) spaces (quasi) $\alpha$-firmly nonexpansive mappings were introduced in \cite{Berdellima} 
and later extended in \cite{Luke} to more general settings. 
For $d=1$ the Wasserstein space is CAT(0) 
and the theory about (quasi) $\alpha$-firmly nonexpansive operators follows from \cite{Berdellima}. 
Therefore our theory is a new contribution in the case $d\geq 2$. 
We will see that quasi $\alpha$-firmly nonexpansive mappings in Wasserstein spaces are closed under compositions of operators, 
whenever they share a common fixed point. 
Prominent examples of such mappings are the Wasserstein proximal mappings of proper, lower semicontinuous, coercive functions that are convex along generalized geodesics. 
Also the push-forward mappings of measures by an $\alpha$-firmly nonexpansive operator in $\R^d$ 
constitute an example of its own interest. 
As an application of such mappings, we consider the cyclic proximal point algorithm.  
In contrast to CAT(0) spaces, Wasserstein spaces have a positive Alexandrov curvature \cite{alex} for $d\geq 2$, 
which makes the analysis of algorithms  including these operators in general quite tricky.
Under appropriate conditions we show that the iterations of this algorithm converge 
in the narrow topology to a minimizer of a given finite sum of proper, lower semicontinuous, 
coercive functions that are convex along generalized geodesics. 
Both situations when these functions share or don't share a common minimizer are treated. 
In the latter case, Lipschitz continuity of each constituent function is needed. 
These results have direct relations to finding the minimum of certain energy and relative entropy functionals, 
see \cite[\S9.3, \S9.4]{Ambrosio}. 
It is known that the minima of such functionals are the stationary solutions 
of corresponding stochastic differential equations and that the corresponding density functions 
appear as solutions of partial differential equations as, e.g. the well-examined Fokker--Planck equation, see, e.g., \cite{JKO1998}.

The outline of this paper is as follows:
Section \ref{sec:prelim} contains the basic notation required for our analysis
in Wasserstein spaces.
In Section \ref{sec:prox}, we study proximal mappings 
of functions that are proper, lsc, coercive and convex along generalized geodesics.
Then, in Section \ref{sec:firmly}, we introduce the concept of quasi $\alpha$-firmly nonexpansive mappings
in Wasserstein spaces. We show that proximal mappings of certain functions are quasi $\frac12$-firmly nonexpansive.
Further, we examine push-forward operators of measures for operators on $\R^d$ which are themselves
(quasi) $\alpha$-firmly nonexpansive.
Since $\alpha$-firmly nonexpansive operators in Hilbert spaces are important due to the related fixed point theory,
we examine the fixed point properties of such operators in Wasserstein spaces in Section \ref{sec:fix}. 
As in Hilbert spaces, the path to go is via Opial's property and Fej\'er's monotonicity.
In Section \ref{sec:cppa}, we apply our results to prove the convergence of the cyclic proximal point algorithm.

\section{Preliminaries} \label{sec:prelim}
The following section provides the necessary facts and notation on Wasserstein spaces
as they can be found in several textbooks as \cite{Ambrosio,ABS21,Santambrosio2015,Villani}.
For applications we also refer to \cite{CP2019}.

Let $\mathbb R^d$, $d\geq 1$ be equipped with the Euclidean norm $\|\cdot\|$, 
and let $\mathcal B(\mathbb R^d)$ be its Borel $\sigma$-algebra.
By $\mathcal P_2(\mathbb R^d)$, we denote the set of probability measures on $\mathcal B(\mathbb R^d)$ 
with finite second moments. With the \emph{$L^2$-Wasserstein metric}
\begin{equation} \label{eq:Wasserstein}
W_2(\mu,\nu)\coloneqq\Big(\min_{\pi\in\Pi(\mu,\nu)}\int_{\mathbb R^d\times \mathbb R^d}\|x-y\|^2\,d\pi(x,y)\Big)^{1/2},
\end{equation}
where $\Pi(\mu,\nu)$ denotes the set of all transport plans between $\mu$ and $\nu$, i.e.,
\begin{equation} \label{eq:transportplan}
\pi(A\times \mathbb R^d)=\mu(A)\;\text{and}\;\pi(\mathbb R^d\times B)=\nu(B) \quad \text{for all } A,B\in\mathcal B(\mathbb R^d),
\end{equation}
the space $\mathcal P_2(\mathbb R^d)$  becomes a separable, complete metric space, called Wasserstein space or briefly Wasserstein space.
Let $\Pi_{opt}(\mu,\nu)$ denote the set of optimal transport plans, i.e. the set of all elements in $\Pi(\mu,\nu)$ that attain \eqref{eq:Wasserstein}.

\begin{remark}\label{rem.11}
If $\mu$ is absolutely continuous with respect to the Lebesgue measure, then the  optimal transport plan
$\pi_\mu^\nu$ is unique and is induced by the unique minimizer $T_\mu^\nu$ 
of the so-called Monge problem 
\begin{equation} \label{eq:Monge}
\tilde W_2(\mu,\nu) = \inf_{T}\int_{\mathbb R^d}\|x-T(x)\|^2\,d\mu(x) \quad \text{subject to } \nu = T_\# \mu \coloneqq \mu \circ T^{-1}
\end{equation}
by  $\pi_\mu^\nu = (\Id,T_\mu^\nu)_\# \mu$. 
In this case $\tilde W_2(\mu,\nu)$ coincides with $W_2(\mu,\nu)$.
The situation changes if $\mu$ is not absolutely continuous.
Then, in contrast to the minimization problem \eqref{eq:Wasserstein}, which is also known as  Kantorovich problem, 
the Monge problem
\eqref{eq:Monge} may fail to have a minimizer.
Further, if an optimal transport map $T_\mu^\nu$ in \eqref{eq:Monge} exists, then $\pi \coloneqq (\Id,T_\mu^\nu)_\# \mu \in \Pi(\mu,\nu)$,
i.e. this plan $\pi$ fulfills the marginal conditions.
However, it doesn't have to be optimal as the example 
$\mu := \frac14 \delta_0 + \frac34 \delta_1$ and
$\nu := \frac34 \delta_0 + \frac14 \delta_1$ shows.
\end{remark}

A sequence $(\mu_n)_{n\in\mathbb N}\subset\mathcal P_2(\R^d)$ converges to $\mu\in\mathcal P_2(\R^d)$, denoted 
by $\mu_n\to \mu$, if 
$$\lim_{n\to \infty}W_2(\mu_n,\mu)=0.$$
A sequence $(\mu_n)_{n\in\mathbb N} \subset\mathcal P_2(\R^d)$ \emph{converges narrowly} 
to $\mu\in\mathcal P_2(\R^d)$, denoted  by $\mu_n \overset{\mathcal N}\to \mu$, if 
\begin{equation}\label{eq:narrow-conv}
\int_{\mathbb R^d}\varphi(x)\,d\mu_n(x)\to\int_{\mathbb R^d}\varphi(x)\,d\mu(x) \quad \text{for all } \varphi \in C_b(\R^d).
\end{equation}
The relation between both topologies is given by the following theorem.

\begin{thm}\cite[Theorem 6.9]{Villani} 	\label{l:equiv}
	For $(\mu_n)_{n\in\mathbb N}\subset\mathcal P_2(\R^d)$, we have  $\mu_n \to \mu$ 
	if and only if 
	$\mu_n \overset{\mathcal N}\to\mu$ 
	and 
	$$\int_{\mathbb R^d}\|x\|^2\,d\mu_n(x)\to\int_{\mathbb R^d}\|x\|^2 \,d\mu(x) \quad\text{as } n\to+\infty.$$
\end{thm}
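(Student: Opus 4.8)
The statement splits into two essentially independent implications, which I would handle in turn. For the forward direction, assume $W_2(\mu_n,\mu)\to 0$. Convergence of the second moments follows immediately from the metric structure: since $\bigl(\int_{\R^d}\|x\|^2\,d\mu_n\bigr)^{1/2}=W_2(\mu_n,\delta_0)$, the reverse triangle inequality gives $\bigl|W_2(\mu_n,\delta_0)-W_2(\mu,\delta_0)\bigr|\le W_2(\mu_n,\mu)\to 0$, whence $\int\|x\|^2\,d\mu_n\to\int\|x\|^2\,d\mu$. For the narrow convergence it suffices, by the Portmanteau theorem (bounded Lipschitz functions are convergence-determining on $\R^d$), to test against an arbitrary bounded $L$-Lipschitz function $\varphi$; picking $\pi_n\in\Pi_{opt}(\mu_n,\mu)$ and applying the Cauchy--Schwarz inequality,
\begin{equation*}
\Bigl|\int\varphi\,d\mu_n-\int\varphi\,d\mu\Bigr|=\Bigl|\int\bigl(\varphi(x)-\varphi(y)\bigr)\,d\pi_n(x,y)\Bigr|\le L\int\|x-y\|\,d\pi_n\le L\,W_2(\mu_n,\mu)\longrightarrow 0 .
\end{equation*}

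For the converse, assume $\mu_n\overset{\mathcal N}\to\mu$ and $\int\|x\|^2\,d\mu_n\to\int\|x\|^2\,d\mu$. Since $\R^d$ is Polish, Skorokhod's representation theorem lets me realize $\mu_n$ and $\mu$ as the laws of random vectors $X_n$ and $X$ on a common probability space with $X_n\to X$ almost surely. The joint law of $(X_n,X)$ lies in $\Pi(\mu_n,\mu)$, so $W_2^2(\mu_n,\mu)\le\E\|X_n-X\|^2$, and everything reduces to proving $\E\|X_n-X\|^2\to 0$. We have $\|X_n-X\|^2\to 0$ almost surely, but this by itself does not control the expectations, and this is precisely where the moment hypothesis must be used. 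I would invoke the elementary fact that if $Y_n\ge 0$, $Y_n\to Y$ almost surely and $\E Y_n\to\E Y<\infty$, then $\{Y_n\}_n$ is uniformly integrable — one checks $(Y_n-Y)^-\le Y$, so $\E(Y_n-Y)^-\to 0$ by dominated convergence, and then $\E(Y_n-Y)^+\to 0$ as well. Applying this with $Y_n=\|X_n\|^2$ yields uniform integrability of $\{\|X_n\|^2\}_n$; since $\|X_n-X\|^2\le 2\|X_n\|^2+2\|X\|^2$, the family $\{\|X_n-X\|^2\}_n$ is uniformly integrable too, and Vitali's convergence theorem together with the a.s.\ convergence gives $\E\|X_n-X\|^2\to 0$, hence $W_2(\mu_n,\mu)\to 0$.

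The hard part is exactly this last step: narrow convergence is a purely topological notion that permits the second moments to escape to infinity — for instance $\mu_n=(1-\tfrac1n)\mu+\tfrac1n\delta_{z_n}$ with $\|z_n\|=n$ converges narrowly to $\mu$ while $\int\|x\|^2\,d\mu_n\to\infty$ — so the whole content of the theorem is that adding the moment assumption promotes weak convergence to convergence in $W_2$, via uniform integrability of the quadratic cost. As an alternative to the Skorokhod argument one could instead take $\pi_n\in\Pi_{opt}(\mu_n,\mu)$, deduce tightness of $(\pi_n)_n$ from tightness of its marginals, extract a narrowly convergent subsequence with limit $\pi\in\Pi(\mu,\mu)$, and combine lower semicontinuity of the transport cost with the same uniform-integrability estimate to compute $\lim_n W_2^2(\mu_n,\mu)=\int\|x-y\|^2\,d\pi$; this route, however, additionally needs the stability (optimality of narrow limits) of optimal plans, so the Skorokhod argument above is the more self-contained choice.
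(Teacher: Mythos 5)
Your proof is correct: the forward direction (reverse triangle inequality against $\delta_0$ for the moments, bounded Lipschitz test functions plus Cauchy--Schwarz along an optimal plan for narrow convergence) and the converse (Skorokhod representation, the Scheff\'e-type uniform integrability of $\|X_n\|^2$ from convergence of the second moments, then Vitali applied to $\|X_n-X\|^2\le 2\|X_n\|^2+2\|X\|^2$) are all sound, and you correctly identify the converse as the substantive half. Note, however, that the paper offers no proof of this statement at all --- it is quoted directly from Villani \cite[Theorem 6.9]{Villani} --- so there is no internal argument to compare against; your Skorokhod-plus-uniform-integrability route is the standard alternative (in the spirit of Ambrosio--Gigli--Savar\'e) to Villani's own proof, which works directly with optimal couplings and truncation of the cost and thereby avoids both Skorokhod representation and the stability-of-optimal-plans issue you flag for your second sketched route.
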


For all $\nu\in\mathcal P_2(\R^d)$, the Wasserstein metric $W_2(\cdot,\nu)$ is lower semicontinuous in the narrow topology, 
i.e. $W_2(\mu,\nu)\leq\liminf_{n\to+\infty}W_2(\mu_n,\nu)$ whenever $\mu_n\overset{\mathcal N}\to\mu$, see \cite[Lemma 4.3]{Villani}.
An important concept is the tightness of a set in $\mathcal P_2(\R^d)$.
A set $S\subseteq \mathcal P_2(\R^d)$ is \emph{tight}, if for every $\varepsilon>0$ 
there exists a compact set $K_{\varepsilon} \subseteq \R^d$ such that 
$\mu(\R^d\setminus K_{\varepsilon})\leq\varepsilon$ for all $\mu\in S$.
  
\begin{thm}[Prokhorov's Theorem \cite{Prokhorov}] 	\label{th:Prokhorov}
	A set $S\subseteq\mathcal P_2(\R^d)$ is tight if and only if $S$ is relatively compact in the topology of narrow convergence.
\end{thm}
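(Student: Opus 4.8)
The plan is to prove the two implications separately, regarding $S$ as a subset of $\mathcal P(\R^d)$ equipped with the narrow topology; this is consistent since a narrow limit of probability measures on $\R^d$ is again a probability measure.

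\emph{Tightness implies relative compactness.} First I would fix a homeomorphism $\phi$ of $\R^d$ onto the open cube $(-1,1)^d$ acting coordinatewise by $x_i\mapsto\tfrac2\pi\arctan x_i$, and set $Q:=[-1,1]^d$, a compact metric space in which $\phi(\R^d)$ is dense. Since $C(Q)$ is separable and $\mathcal P(Q)$ is a weak-$*$ closed subset of the unit ball of $C(Q)^*$, Banach--Alaoglu yields that $\mathcal P(Q)$ is sequentially compact in the narrow topology (which on a compact space coincides with the weak-$*$ topology). Given a sequence $(\mu_n)\subseteq S$, the pushforwards $\hat\mu_n:=\phi_\#\mu_n$ then admit a subsequence converging narrowly in $\mathcal P(Q)$ to some $\hat\mu$. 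Using tightness of $S$, say compact sets $K_m\subseteq\R^d$ with $\mu(\R^d\setminus K_m)<1/m$ for all $\mu\in S$, together with the Portmanteau theorem for the open sets $Q\setminus\phi(K_m)$, one gets $\hat\mu\big(Q\setminus\phi(K_m)\big)\le 1/m$, so that $\hat\mu$ is concentrated on $\bigcup_m\phi(K_m)\subseteq\phi(\R^d)$. Hence $\mu:=(\phi^{-1})_\#\hat\mu\in\mathcal P(\R^d)$, and a routine approximation argument --- split $\int_{\R^d}g\,d\mu_n$ for $g\in C_b(\R^d)$ over $K_m$ and its complement, extend $g\circ\phi^{-1}|_{\phi(K_m)}$ to an element of $C(Q)$ by Tietze, and exploit the uniform smallness of $\mu_n(\R^d\setminus K_m)$ --- shows $\mu_{n_j}\overset{\mathcal N}\to\mu$ in $\mathcal P(\R^d)$. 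Thus $S$ is sequentially relatively compact, and since the narrow topology on $\mathcal P(\R^d)$ is metrizable, $S$ is relatively compact.

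\emph{Relative compactness implies tightness.} The crucial step is the lemma: if $S$ is narrowly relatively compact, then for all $\varepsilon>0$ and $\rho>0$ there are finitely many open balls $B_1,\dots,B_k$ of radius $\rho$ with $\mu(B_1\cup\cdots\cup B_k)>1-\varepsilon$ for every $\mu\in S$. I would prove this by contradiction: fix a countable dense set $\{x_i\}\subseteq\R^d$; if the claim fails for some $\varepsilon,\rho$, then for each $k$ there is $\mu_k\in S$ with $\mu_k\big(\bigcup_{i\le k}B(x_i,\rho)\big)\le 1-\varepsilon$; extract a narrowly convergent subsequence $\mu_{k_j}\to\mu$, pick $m$ with $\mu\big(\bigcup_{i\le m}B(x_i,\rho)\big)>1-\varepsilon/2$ (possible, since these open sets exhaust $\R^d$), and observe that Portmanteau gives $\liminf_j\mu_{k_j}(U)\ge\mu(U)>1-\varepsilon/2$ for $U:=\bigcup_{i\le m}B(x_i,\rho)$, while $\mu_{k_j}(U)\le 1-\varepsilon$ once $k_j\ge m$, a contradiction. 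Granting the lemma, I would apply it with $\rho=1/n$ and $\varepsilon=\varepsilon_0/2^n$ to obtain open sets $U_n$, each a finite union of $1/n$-balls, with $\mu(U_n)>1-\varepsilon_0/2^n$ for all $\mu\in S$; then $K:=\bigcap_n\overline{U_n}$ is closed and totally bounded (for each $n$ it is covered by finitely many closed balls of radius $1/n$), hence compact because $\R^d$ is complete, and $\mu(\R^d\setminus K)\le\sum_n\mu(\R^d\setminus U_n)\le\varepsilon_0$ for all $\mu\in S$, which is tightness.

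I expect the second implication, and specifically the lemma, to be the main obstacle: it is the only place where completeness of the underlying space enters (through ``totally bounded and complete implies compact''), and extracting the quantitative covering statement from the abstract compactness hypothesis is the delicate point. The first implication is comparatively soft --- weak-$*$ compactness on a compactification, together with a check that tightness prevents mass from escaping to the boundary.
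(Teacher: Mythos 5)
Your proof is correct, but note that the paper does not prove this statement at all: Theorem \ref{th:Prokhorov} is quoted as the classical Prokhorov theorem with a citation, so there is no internal argument to compare against. What you have written is essentially the standard textbook proof. For sufficiency you compactify $\R^d$ into the cube $Q=[-1,1]^d$ via coordinatewise arctan, get sequential weak-$*$ compactness of $\mathcal P(Q)$ from Banach--Alaoglu plus separability of $C(Q)$, use the open-set form of Portmanteau together with tightness to show the limit puts no mass on $Q\setminus\phi(\R^d)$, and transfer convergence back by the Tietze/splitting estimate; all of these steps check out, including the direction of the Portmanteau inequality and the uniform control $2\|g\|_\infty/m$ of the error terms. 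For necessity, your covering lemma (finitely many $\rho$-balls carrying mass $>1-\varepsilon$ uniformly on $S$, proved by contradiction via a narrowly convergent subsequence and Portmanteau) and the subsequent construction $K=\bigcap_n\overline{U_n}$ with $\mu(\R^d\setminus K)\le\varepsilon_0$ are exactly the classical argument, and your use of metrizability of the narrow topology on $\mathcal P(\R^d)$ to pass between sequential and topological compactness is legitimate. You were also right to flag the one genuine subtlety in the paper's formulation: since a narrow limit of measures in $\mathcal P_2(\R^d)$ need not have finite second moment, ``relatively compact'' must be read in $\mathcal P(\R^d)$ with the narrow topology, which is how the paper uses the theorem (via Lemma \ref{l:fundamental}, where narrow lower semicontinuity of $W_2$ then puts the cluster points back into $\mathcal P_2(\R^d)$).
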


In particular, we have the following lemma.

\begin{lemma}\cite[Theorem 1]{Yue} 	\label{l:fundamental}
	Closed balls in $\left(\mathcal P_2(\R^d),W_2 \right)$ are tight. 
\end{lemma}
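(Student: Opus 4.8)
The plan is to reduce the statement to a \emph{uniform} bound on second moments over the ball, after which tightness follows from Markov's inequality. Fix a closed ball $B=\{\mu\in\mathcal P_2(\R^d):W_2(\mu,\nu_0)\le r\}$ with center $\nu_0\in\mathcal P_2(\R^d)$ and radius $r\ge 0$. The first step is to observe that for any $\mu\in\mathcal P_2(\R^d)$ one has $W_2(\mu,\delta_0)^2=\int_{\R^d}\|x\|^2\,d\mu(x)$, where $\delta_0$ is the Dirac measure at the origin. Hence, by the triangle inequality for $W_2$,
\begin{equation}
\Big(\int_{\R^d}\|x\|^2\,d\mu(x)\Big)^{1/2}=W_2(\mu,\delta_0)\le W_2(\mu,\nu_0)+W_2(\nu_0,\delta_0)\le r+\Big(\int_{\R^d}\|y\|^2\,d\nu_0(y)\Big)^{1/2}=:C^{1/2},
\end{equation}
and $C<\infty$ because $\nu_0$ has finite second moment. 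Thus $\int_{\R^d}\|x\|^2\,d\mu(x)\le C$ for \emph{all} $\mu\in B$, with $C$ depending only on $r$ and $\nu_0$. (Alternatively, one could bound the second moment directly by splitting $\|x\|^2\le 2\|x-y\|^2+2\|y\|^2$ against an optimal plan $\pi\in\Pi_{opt}(\mu,\nu_0)$, giving $\int\|x\|^2\,d\mu\le 2r^2+2\int\|y\|^2\,d\nu_0$; the triangle-inequality route is slightly cleaner.)

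The second step is the localization. For $R>0$ let $K_R\coloneqq\{x\in\R^d:\|x\|\le R\}$, which is compact. By Markov's inequality and the uniform moment bound,
\begin{equation}
\mu(\R^d\setminus K_R)=\mu\big(\{\|x\|>R\}\big)\le\frac1{R^2}\int_{\R^d}\|x\|^2\,d\mu(x)\le\frac{C}{R^2}\qquad\text{for all }\mu\in B.
\end{equation}
Given $\varepsilon>0$, it then suffices to choose $R=R_\varepsilon$ with $C/R_\varepsilon^2\le\varepsilon$, e.g.\ $R_\varepsilon=\sqrt{C/\varepsilon}$, and set $K_\varepsilon\coloneqq K_{R_\varepsilon}$; this yields $\mu(\R^d\setminus K_\varepsilon)\le\varepsilon$ for every $\mu\in B$, which is precisely the definition of tightness of $B$.

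There is no genuine obstacle in this argument; the only point that requires care is recognizing that the hypothesis ``$\mu$ lies in a fixed closed ball'' transfers, via the triangle inequality and the finiteness of the second moment of the center, into a bound on $\int\|x\|^2\,d\mu$ that is \emph{independent of} $\mu\in B$ — everything else is a direct application of Markov's inequality. It is worth noting that combined with Prokhorov's Theorem (Theorem \ref{th:Prokhorov}) this immediately gives relative compactness of closed $W_2$-balls in the narrow topology, which is the form in which the lemma will be used.
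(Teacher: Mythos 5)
Your proof is correct: the identity $W_2^2(\mu,\delta_0)=\int_{\R^d}\|x\|^2\,d\mu(x)$ plus the triangle inequality gives a second-moment bound uniform over the ball, and Markov's inequality then yields tightness — no gaps. The paper itself gives no proof of this lemma (it simply cites \cite[Theorem 1]{Yue}), and your argument is the standard one underlying that reference, so it serves as a correct self-contained justification.
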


The Wasserstein spaces is a so-called \emph{geodesic spaces}, 
meaning, that for every $\mu,\nu \in \mathcal P_2(\R^d)$,
there exists a curve $\gamma:[0,1]\to\mathcal P_2(\R^d)$ 
with $\gamma(0) = \mu$, $\gamma(1) = \nu$ and
\begin{equation} \label{geode}
W_2(\gamma(t),\gamma(s)) = |t-s| W_2(\gamma(0),\gamma(1)) \quad \text{for every } t,s\in[0,1].
\end{equation}
A  curve $\gamma:[0,1]\to\mathcal P_2(\R^d)$ with property \eqref{geode} 
is called constant speed \emph{geodesic}.
If $\pi \in \Pi_{opt}(\mu_1,\mu_2)$, then the curve
\begin{align} \label{geo_plan}
\mu_t^{1 \to 2} &\coloneqq g(t,\cdot)_\# \pi,  \quad t \in [0,1],
\end{align}
with $g: [0,1] \times \R^d \times \R^d \to \R^d$, $(t,x_1,x_2) \mapsto (1-t) x_1 + t x_2$
is a constant speed geodesic connecting $\mu_1$ and $\mu_2$.
Conversely, every constant speed geodesic connecting $\mu_1$ and $\mu_2$ has a representation \eqref{geo_plan}
for a suitable $\pi \in \Pi_{opt}(\mu_1,\mu_2)$, see \cite[Theorem 7.2.2]{Ambrosio}.
In particular, if $\mu_1$ is absolutely continuous with respect to the $d$-dimensional Lebesgue measure, 
then, by Remark \ref{rem.11},  
there exists exactly one such constant speed geodesic.

We will need a more general definition of geodesics in order to make the Wasserstein proximal mappings in the next section well-defined. 
For $\mu_0,\mu_1,\mu_2 \in \mathcal P_2(\R^d)$,
let $\Pi(\mu_0, \mu_1, \mu_2)$ denote the set of measures $\boldsymbol{\pi} \in \mathcal P_2(\R^d \times \R^d \times \R^d)$ with marginals
$\mu_i$, $i=0,1,2$, and 
let $\Pi^{j,k}: \R^d \times \R^d \times \R^d \to \R^d \times \R^d$, $(x_0,x_1,x_2) \mapsto (x_j,x_k)$ for $j,k=0,1,2$.
A \emph{generalized geodesic} connecting $\mu_1$ and $\mu_2$ with base $\mu_0$
is any curve of type
$$
\mu_{0,t}^{1 \to 2} \coloneqq h(t,\cdot)_\# {\boldsymbol{\pi}}, 
$$
with
$
h(t,\cdot): [0,1] \times \R^d \times \R^d \times \R^d \to \R^d$, $(t,x_0,x_1,x_2) \mapsto (1-t) x_1 + t x_2
$,
where 
\begin{equation}\label{gg-plan}
\boldsymbol{\pi} \in \Pi(\mu_0, \mu_1, \mu_2), \qquad \Pi^{0,1}_\#\boldsymbol{\pi}  = \Pi_{opt}(\mu_0, \mu_1), 
\quad 
\Pi^{0,2}_\#\boldsymbol{\pi} = \Pi_{opt}(\mu_0, \mu_2).
\end{equation}
Choosing the base $\mu_0 = \mu_1$, we have again the definition of a geodesic.
Moreover, for an absolutely continuous base measure $\mu_0$, 
the generalized geodesic connecting $\mu_1$ and $\mu_2$ 
is uniquely determined and the plan in \eqref{gg-plan} is given via the optimal transport maps by 
$\boldsymbol{\pi}=(\Id,T_{\mu_0}^{\mu_1},T_{\mu_0}^{\mu_2})_\# \mu_0$, e.g. see \cite[Remark 9.2.3]{Ambrosio}.

We consider functions $\mathscr F: \mathcal P_2(\R^d) \to (-\infty,\infty]$ with effective domain
$$D(\mathscr F) \coloneqq \{\mu \in \mathcal P_2(\R^d): \mathscr F(\mu) < \infty\}$$
and call a function \emph{proper} if $D(\mathscr F) \not = \emptyset$.
A function $\mathscr F: \mathcal P_2(\R^d) \to (-\infty,\infty]$ is said to be
\emph{
convex along generalized geodesics}, if for every
$\mu_0,\mu_1,\mu_2 \in D(\mathscr F)$, there exists a generalized geodesic $\mu_{0,t}^{1 \to 2}$ with base $\mu_0$
such that 
\begin{equation} \label{eq:cgg}
\mathscr F(\mu_{0,t}^{1 \to 2}) \le (1-t) \mathscr F(\mu_1) + t \mathscr F(\mu_2) 
\quad \text{for all } t \in [0,1].
\end{equation}

Typical examples of functions defined on $\mathcal P_2(\mathbb R^d)$ that are convex along generalized geodesics are the 
potential and interaction energy and the relative entropy discussed, e.g., in \cite[\S9.3, \S9.4]{Ambrosio}.

\section{Proximal mappings}\label{sec:prox}
In this section, we consider proximal mappings in Wasserstein spaces, which
play an important role in Wasserstein gradient flow methods.
Let $\mathscr F:\mathcal P_2(\R^d) \to(-\infty,+\infty]$ be proper, lower semicontinuous (lsc), coercive (in the sense of \cite[(2.4.10)]{Ambrosio}) and convex along generalized geodesics. 
Then the proximal mapping $\mathscr J_{\tau}: \mathcal P_2(\R^d) \to \mathcal P_2(\R^d)$ given by
\begin{equation} \label{eq:proximal-mapping}
\mathscr J_{\tau}(\mu) \coloneqq \argmin_{\nu\in\mathcal P_2(\R^d) }\Big\{\mathscr F(\nu)+\frac{1}{2\,\tau} W^2_2(\nu,\mu)\Big\},
\quad \tau>0
\end{equation}
is well-defined, i.e., for every $\mu\in\overline{D(\mathscr F)}$, 
the minimizer in \eqref{eq:proximal-mapping} exits and is unique, see  \cite[Theorem 4.1.2, Lemma 9.2.7]{Ambrosio}. 
Moreover, by \cite[Theorem 4.1.2]{Ambrosio} (with $\lambda=0$), for all $\mu \in \overline{D(\mathscr F)}$ and all $\nu\in D(\mathscr F)$, the following inequality is satisfied
\begin{align}
 \label{eq:1}\frac{1}{2\,\tau} W^2_2(\mathscr J_{\tau}(\mu),\nu)-\frac{1}{2\,\tau} W^2_2(\mu,\nu)
\leq  \mathscr F(\nu)- \mathscr F(\mathscr J_{\tau}(\mu))-\frac{1}{2\,\tau}W^2_2(\mathscr J_{\tau}(\mu),\mu).
\end{align}
Replacing $\nu$ with $\mathscr J_{\tau}(\nu)$ and changing the roles of $\mu$ with $\nu$ in \eqref{eq:1}, 
we obtain for all $\mu,\nu\in \mathcal P_2(\R^d)$ that
\begin{align} \label{eq:2}
W^2_2(\mathscr J_{\tau}(\mu),\mathscr J_{\tau}(\nu))
&\leq
\frac{1}{2}\Big(W^2_2(\mu,\mathscr J_{\tau}(\nu))+W^2_2(\mathscr J_{\tau}(\mu),\nu)\\
& \quad - W^2_2(\mathscr J_{\tau}(\mu),\mu)
- W^2_2(\mathscr J_{\tau}(\nu),\nu)\Big).
\end{align}
As in Hilbert spaces, minimizers of $\mathscr F:\mathcal P_2(\R^d) \to(-\infty,+\infty]$
and fixed points of its proximal mapping are closely related.

\begin{proposition}
Let $\mathscr F:\mathcal P_2(\R^d) \to(-\infty,+\infty]$ be proper, lsc,  coercive and convex along generalized geodesics. 
Then it holds
$$
\argmin_{\mu\in\mathcal P(\R^d)} \mathscr F(\mu) =  \Fix\mathscr J_{\tau}.
$$
\end{proposition}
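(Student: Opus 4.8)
The plan is to prove the two inclusions separately, using the defining minimization property of $\mathscr J_\tau$ for one direction and inequality \eqref{eq:1} for the other. Throughout, the only thing to watch is the bookkeeping with effective domains, so that every expression involved is meaningful: recall that $\mathscr J_\tau$ is defined on $\overline{D(\mathscr F)}$ and that, for any $\mu \in \overline{D(\mathscr F)}$, the point $\mathscr J_\tau(\mu)$ is the \emph{unique} minimizer of $\nu \mapsto \mathscr F(\nu) + \frac{1}{2\tau} W_2^2(\nu,\mu)$, and in particular lies in $D(\mathscr F)$ since that functional is finite somewhere.

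For the inclusion $\argmin_{\mu}\mathscr F(\mu) \subseteq \Fix \mathscr J_\tau$, I would take $\mu \in \argmin \mathscr F$. Since $\mathscr F$ is proper, $\mathscr F(\mu) < \infty$, hence $\mu \in D(\mathscr F) \subseteq \overline{D(\mathscr F)}$ and $\mathscr J_\tau(\mu)$ is well-defined. For every $\nu \in \mathcal P_2(\R^d)$ one has $\mathscr F(\mu) = \mathscr F(\mu) + \frac{1}{2\tau} W_2^2(\mu,\mu) \le \mathscr F(\nu) \le \mathscr F(\nu) + \frac{1}{2\tau} W_2^2(\nu,\mu)$, so $\mu$ itself minimizes the functional in \eqref{eq:proximal-mapping}; by uniqueness of that minimizer, $\mathscr J_\tau(\mu) = \mu$, i.e.\ $\mu \in \Fix \mathscr J_\tau$.

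For the reverse inclusion, I would take $\mu \in \Fix \mathscr J_\tau$, so in particular $\mu \in \overline{D(\mathscr F)}$ and $\mathscr J_\tau(\mu) = \mu$. Substituting $\mathscr J_\tau(\mu) = \mu$ into \eqref{eq:1}, the difference $W_2^2(\mathscr J_\tau(\mu),\nu) - W_2^2(\mu,\nu)$ on the left vanishes, as does $W_2^2(\mathscr J_\tau(\mu),\mu)$ on the right, leaving $0 \le \mathscr F(\nu) - \mathscr F(\mathscr J_\tau(\mu)) = \mathscr F(\nu) - \mathscr F(\mu)$ for every $\nu \in D(\mathscr F)$. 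For $\nu \notin D(\mathscr F)$ the inequality $\mathscr F(\mu) \le \mathscr F(\nu) = +\infty$ is trivial, using that $\mathscr F(\mu) = \mathscr F(\mathscr J_\tau(\mu)) < \infty$ because $\mathscr J_\tau(\mu) \in D(\mathscr F)$. Hence $\mathscr F(\mu) \le \mathscr F(\nu)$ for all $\nu$, i.e.\ $\mu \in \argmin \mathscr F$.

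I do not expect a genuine obstacle here; the argument is essentially the Hilbert-space computation transported verbatim, the inequality \eqref{eq:1} (with $\lambda = 0$) playing the role of the variational characterization of the resolvent. The only subtle points are the two already flagged: verifying that minimizers lie in $D(\mathscr F) \subseteq \overline{D(\mathscr F)}$ so that $\mathscr J_\tau$ is applicable to them, and invoking uniqueness of the proximal minimizer to upgrade ``$\mu$ minimizes the regularized functional'' to ``$\mathscr J_\tau(\mu) = \mu$''.
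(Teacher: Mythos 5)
Your proof is correct and follows essentially the same route as the paper: the inclusion $\argmin\mathscr F\subseteq\Fix\mathscr J_\tau$ via the defining minimization property (and uniqueness of the proximal minimizer), and the reverse inclusion by specializing inequality \eqref{eq:1} at a fixed point. The only difference is that you spell out the domain bookkeeping and the case $\nu\notin D(\mathscr F)$, which the paper leaves implicit.
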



\begin{proof}
Let $\hat \mu$ be a minimizer of $\mathscr F$. Then, for all $\mu\in\mathcal P_2(\R^d)$,
$$
\mathscr F(\hat \mu) + \frac{1}{2\tau}\,W_2^2(\hat \mu,\hat \mu)
= 
\mathscr F(\hat \mu)
\leq 
\mathscr F(\mu)
\leq
\mathscr F(\mu)+\frac{1}{2\tau}\,W_2^2(\mu,\hat \mu) 
$$
implying $\hat \mu = \mathscr J_{\tau}(\hat \mu)$. 
The converse follows immediately from inequality \eqref{eq:1} with $\mu$ replaced by $\hat\mu$ and using $\hat\mu=\mathscr J_{\tau}(\hat\mu)$,
\begin{align*}
0&= \frac{1}{2\,\tau} W^2_2(\mathscr J_{\tau}(\hat\mu),\nu)-\frac{1}{2\,\tau} W^2_2(\hat\mu,\nu)
\\
&\leq  \mathscr F(\nu)- \mathscr F(\mathscr J_{\hat\tau}(\mu))-\frac{1}{2\,\tau}W^2_2(\mathscr J_{\tau}(\hat\mu),\hat\mu)=\mathscr F(\nu)- \mathscr F(\hat\mu),
\end{align*}
i.e. $\mathscr F(\hat\mu)\leq \mathscr F(\nu)$ for all $\nu\in D(\mathscr F)$. 
\end{proof}

\section{Quasi $\alpha$-firmly nonexpansive mappings}\label{sec:firmly}
Recall that for $\alpha \in (0,1)$,
an operator $T:\mathbb R^d\to \mathbb R^d$ is \emph{$\alpha$-firmly nonexpansive}, if we have 
\begin{equation} \label{eq:fne-operator}
\|Tx-Ty\|^2\leq\|x-y\|^2-\frac{1-\alpha}{\alpha}\|(\Id-T)x-(\Id-T)y\|^2 \quad \text{for all } x,y \in \R^d.
\end{equation}
If the fixed point set $\Fix T \coloneqq \{x \in \R^d: T(x) = x\}$ is nonempty 
and \eqref{eq:fne-operator} is restricted to $y \in \Fix F$, i.e.,
\begin{equation} \label{eq:quasi-fne-operator}
\|Tx-y\|^2\leq\|x-y\|^2-\frac{1-\alpha}{\alpha}\|(\Id-T)x\|^2 \quad \text{for all } x  \in \R^d,\, y\in\Fix T,
\end{equation}
then $T$ is called \emph{quasi $\alpha$-firmly nonexpansive}.
We do not know how to translate the definition of $\alpha$-firmly nonexpansive operators on $\R^d$ to Wasserstein spaces.
However, for quasi $\alpha$-firmly nonexpansive operators this is possible.
First, we say as usual that
a mapping $\mathscr T:\mathcal P_2(\R^d)\to \mathcal P_2(\R^d)$ is \emph{nonexpansive}, if 
\begin{equation} \label{nonexp}
W_2(\mathscr T(\mu),\mathscr T(\nu))\leq W_2(\mu,\nu)\quad \text {for all }\mu,\nu\in\mathcal P_2(\R^d).
\end{equation}
If the fixed point set 
$\Fix \mathscr T \coloneqq \{\mu\in\mathcal P_2(\R^d):\mathscr T(\mu)=\mu\}$ 
is nonempty 
and \eqref{nonexp} holds for all $\mu\in\mathcal P_2(\R^d)$ and all
$\nu\in\Fix \mathscr T$, 
then $\mathscr T$ is said to be a \emph{quasi nonexpansive mapping}. 
Finally, for $\alpha\in(0,1)$, a mapping $\mathscr T:\mathcal P_2(\R^d)\to\mathcal P_2(\R^d)$ 
is \emph{quasi $\alpha$-firmly nonexpansive}, 
if $\Fix\mathscr T \neq \emptyset$ and 
for  all 
$\mu\in\mathcal P(\R^d)$, $\nu\in\Fix\mathscr T$, the following inequality holds true:
\begin{equation} \label{eq:firmly}
W^2_2(\mathscr T(\mu),\nu) \leq W_2^2(\mu,\nu)-\frac{1-\alpha}{\alpha}\,W_2^2(\mu,\mathscr T(\mu)).
\end{equation}
By the next proposition, proximal mappings are quasi $\alpha$-firmly nonexpansive.

\begin{proposition} 	\label{p:proximal-quasi}
	Let $\mathscr F:\mathcal P_2(\R^d) \to(-\infty,+\infty]$ be proper, lsc, coercive and convex along generalized geodesics. 
	Then, for every $\tau>0$, the proximal mapping 
	$\mathscr J_{\tau}:\mathcal P_2(\R^d)\to\mathcal P_2(\R^d)$  
	is quasi $\alpha$-firmly nonexpansive on $\overline{D(\mathscr F)}$	with $\alpha=1/2$.
\end{proposition}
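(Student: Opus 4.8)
The plan is to obtain \eqref{eq:firmly} with $\alpha=\tfrac12$ (so that $\tfrac{1-\alpha}{\alpha}=1$) directly from the fundamental inequality \eqref{eq:1}, combined with the identification $\Fix\mathscr J_{\tau}=\argmin\mathscr F$ established in the previous proposition. First I would fix $\mu\in\overline{D(\mathscr F)}$ and $\nu\in\Fix\mathscr J_{\tau}$. Since $\mathscr F$ is proper, any minimizer lies in $D(\mathscr F)$, so $\nu\in D(\mathscr F)$ and \eqref{eq:1} applies with this choice of $\nu$, yielding
\[
\frac{1}{2\,\tau}W^2_2(\mathscr J_{\tau}(\mu),\nu)-\frac{1}{2\,\tau}W^2_2(\mu,\nu)\le \mathscr F(\nu)-\mathscr F(\mathscr J_{\tau}(\mu))-\frac{1}{2\,\tau}W^2_2(\mathscr J_{\tau}(\mu),\mu).
\]

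Next, since $\nu\in\argmin\mathscr F$ and $\mathscr J_{\tau}(\mu)\in D(\mathscr F)$ (the proximal minimizer has finite objective value), we have $\mathscr F(\nu)-\mathscr F(\mathscr J_{\tau}(\mu))\le 0$, so the right-hand side above is bounded by $-\tfrac{1}{2\tau}W^2_2(\mathscr J_{\tau}(\mu),\mu)$. Multiplying through by $2\tau>0$ then gives
\[
W^2_2(\mathscr J_{\tau}(\mu),\nu)\le W^2_2(\mu,\nu)-W^2_2(\mu,\mathscr J_{\tau}(\mu)),
\]
which is precisely \eqref{eq:firmly} for $\alpha=\tfrac12$. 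Finally, since $\mathscr F$ is proper, lsc and coercive, one has $\Fix\mathscr J_{\tau}=\argmin\mathscr F\neq\emptyset$, so the notion of quasi $\alpha$-firm nonexpansiveness is non-vacuous in this setting.

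I expect no genuine obstacle here: the whole content of the statement is essentially a repackaging of \eqref{eq:1}. The only points requiring a line of care are (i) verifying that the squared Wasserstein terms and the two values of $\mathscr F$ occurring in \eqref{eq:1} are all finite, which follows from $\nu,\mathscr J_{\tau}(\mu)\in D(\mathscr F)$ together with finiteness of second moments, and (ii) noting that \eqref{eq:1}, and hence the conclusion, is valid for every $\mu$ in the closure $\overline{D(\mathscr F)}$, which is exactly the set on which $\mathscr J_{\tau}$ is considered in the statement.
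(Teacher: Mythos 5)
Your argument is correct, and it reaches \eqref{eq:firmly} with $\alpha=\tfrac12$ by a slightly different (equally short) route than the paper. The paper specializes the symmetrized inequality \eqref{eq:2} at a fixed point, i.e.\ it sets $\mathscr J_{\tau}(\nu)=\nu$ in \eqref{eq:2}; since \eqref{eq:2} was obtained from \eqref{eq:1} by adding the two inequalities with the roles of $\mu,\nu$ swapped, the $\mathscr F$-terms cancel there, so no appeal to the identification $\Fix\mathscr J_{\tau}=\argmin\mathscr F$ is needed. You instead apply \eqref{eq:1} once, with $\nu\in\Fix\mathscr J_{\tau}$, and discard the term $\mathscr F(\nu)-\mathscr F(\mathscr J_{\tau}(\mu))\le 0$ using the previous proposition's identification of fixed points with minimizers; this is a valid and arguably more transparent use of the variational inequality, at the (negligible) cost of invoking that proposition. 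One caveat: your closing claim that properness, lsc and coercivity force $\Fix\mathscr J_{\tau}=\argmin\mathscr F\neq\emptyset$ is not justified — coercivity in the sense of \cite[(2.4.10)]{Ambrosio} guarantees well-posedness of the proximal problem, not attainment of the infimum of $\mathscr F$ (e.g.\ a potential energy with a convex, bounded-below potential whose infimum is not attained). This does not affect the substance, since quasi $\alpha$-firm nonexpansiveness is an assertion about $\nu\in\Fix\mathscr J_{\tau}$ and your inequality holds for every such $\nu$ whenever the fixed point set is nonempty, which is also how the paper implicitly reads the statement; but you should either drop that sentence or phrase the conclusion conditionally on $\Fix\mathscr J_{\tau}\neq\emptyset$.
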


\begin{proof}
The claim follows immediately from \eqref{eq:2} using $\mathscr J_{\tau} (\nu) = \nu$.
\end{proof}

Next, we are interested in the behavior of push-forward operators of (quasi) $\alpha$-firmly nonexpansive operators $T: \R^d \to \R^d$.
In other words, we consider $\mathscr T_T:\mathcal P_2(\R^d)\to\mathcal P_2(\R^d)$ defined by 
\begin{equation}
\label{eq:operator-T}
\mathscr T_T (\mu) \coloneqq T_{\#}\mu = \mu \circ T^{-1}.
\end{equation}

\begin{proposition} \label{p:operator-T}
	Let $T: \mathbb R^d\to  \mathbb R^d$ be an $\alpha$-firmly nonexpansive operator for a certain $\alpha\in(0,1)$. 
	Then the operator $\mathscr T_T:\mathcal P_2(\R^d)\to\mathcal P_2(\R^d)$ in \eqref{eq:operator-T} is nonexpansive. 
	In particular, with $\widetilde T \coloneqq \Id-T$, it satisfies 
	\begin{equation} 	\label{eq:T-ineq}
	W_2^2(\mathscr T_T(\mu),\mathscr T_T(\nu))
	\leq 
	W^2_2(\mu,\nu)-\frac{1-\alpha}{\alpha}W^2_2(\mathscr T_{\widetilde T}(\mu),\mathscr T_{\widetilde T}(\nu))\quad \text{for all } \mu,\nu\in\mathcal P_2(\R^d).
	\end{equation}
\end{proposition}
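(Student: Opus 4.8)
The plan is the standard ``push an optimal plan forward'' argument. First I would record two preliminary observations. An $\alpha$-firmly nonexpansive operator is in particular nonexpansive (drop the nonnegative last term in \eqref{eq:fne-operator} using $\tfrac{1-\alpha}{\alpha}>0$), so $\|Tx\|\le\|T0\|+\|x\|$ and hence $T$ maps $\mathcal P_2(\R^d)$ into itself via push-forward; the same applies to $\widetilde T=\Id-T$, which is Lipschitz with constant at most $2$. Thus $\mathscr T_T$ and $\mathscr T_{\widetilde T}$ are well-defined mappings on $\mathcal P_2(\R^d)$. Both $T$ and $\widetilde T$ are continuous, hence Borel, so the push-forwards are genuine Borel probability measures.

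Next, fix $\mu,\nu\in\mathcal P_2(\R^d)$ and pick an optimal plan $\pi\in\Pi_{opt}(\mu,\nu)$, so that $\int_{\R^d\times\R^d}\|x-y\|^2\,d\pi(x,y)=W_2^2(\mu,\nu)$. The measure $(T,T)_\#\pi$ has marginals $T_\#\mu=\mathscr T_T(\mu)$ and $T_\#\nu=\mathscr T_T(\nu)$, hence it is an admissible, though generally not optimal, transport plan between them. Since $W_2^2$ is defined as a minimum over such plans, this gives $W_2^2(\mathscr T_T(\mu),\mathscr T_T(\nu))\le\int\|Tx-Ty\|^2\,d\pi(x,y)$. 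Inserting the pointwise estimate \eqref{eq:fne-operator} into the integrand and integrating term by term yields $\int\|Tx-Ty\|^2\,d\pi\le W_2^2(\mu,\nu)-\tfrac{1-\alpha}{\alpha}\int\|\widetilde Tx-\widetilde Ty\|^2\,d\pi$.

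Finally, the measure $(\widetilde T,\widetilde T)_\#\pi$ is an admissible transport plan between $\widetilde T_\#\mu=\mathscr T_{\widetilde T}(\mu)$ and $\widetilde T_\#\nu=\mathscr T_{\widetilde T}(\nu)$, so again by the definition of $W_2$ as an infimum we get the \emph{lower} bound $\int\|\widetilde Tx-\widetilde Ty\|^2\,d\pi\ge W_2^2(\mathscr T_{\widetilde T}(\mu),\mathscr T_{\widetilde T}(\nu))$. Because this quantity is subtracted with the positive coefficient $\tfrac{1-\alpha}{\alpha}$, substituting the lower bound preserves the inequality and produces exactly \eqref{eq:T-ineq}; dropping the nonnegative subtracted term then gives nonexpansiveness \eqref{nonexp}. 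The only point that requires a moment's care is precisely this last direction of inequality: for the error term one needs a lower bound on the integral, which is what the suboptimality of $(\widetilde T,\widetilde T)_\#\pi$ provides, whereas for the leading term one needs an upper bound, which is what suboptimality of $(T,T)_\#\pi$ provides. Beyond getting these two directions right, there is no real obstacle; the argument is a single integration against the transported optimal plan.
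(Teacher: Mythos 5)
Your argument is correct and coincides with the paper's own proof: both push an optimal plan $\pi\in\Pi_{opt}(\mu,\nu)$ through $(T,T)$ and $(\widetilde T,\widetilde T)$, integrate the pointwise inequality \eqref{eq:fne-operator}, and use suboptimality of the pushed-forward plans in the two correct directions. Your added remarks on well-definedness of $\mathscr T_T$ and $\mathscr T_{\widetilde T}$ on $\mathcal P_2(\R^d)$ are a harmless extra that the paper leaves implicit.
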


\begin{proof}
	Let $\pi\in\Pi_{opt}(\mu,\nu)$. Then 
	\begin{align*}
	W_2^2(\mathscr T_T(\mu),\mathscr T_T(\nu))
	\leq
	\int_{ \mathbb R^d\times  \mathbb R^d}\|x-y\|^2\,d \left((T,T)_{\#}\pi \right)(x,y)=\int_{ \mathbb R^d\times  \mathbb R^d}\|Tx-Ty\|^2\,d\pi(x,y).
	\end{align*}
	By assumption the operator $T: \mathbb R^d\to  \mathbb R^d$ is $\alpha$-firmly nonexpansive for some $\alpha\in(0,1)$, so that
	\begin{align*}
	\int_{ \mathbb R^d\times  \mathbb R^d}&\|Tx-Ty\|^2\,d\pi(x,y)\\&\leq \int_{ \mathbb R^d\times  \mathbb R^d}\|x-y\|^2\,d\pi(x,y)-\frac{1-\alpha}{\alpha}\int_{ \mathbb R^d\times  \mathbb R^d}\|(\Id-T)x-(\Id-T)y\|^2\,d\pi(x,y)\\&
	=W^2_2(\mu,\nu)-\frac{1-\alpha}{\alpha}\int_{ \mathbb R^d\times  \mathbb R^d}\|x-y\|^2\,d\left( (\widetilde T,\widetilde T)_{\#}\pi \right)(x,y)\\
	&
	\leq W^2_2(\mu,\nu)-\frac{1-\alpha}{\alpha}\,W^2_2(\mathscr T_{\widetilde T}(\mu),\mathscr T_{\widetilde T}(\nu)).
	\end{align*}
\end{proof}

The next result describes the relationships between $\Fix T$ and $\Fix \mathscr T_T$.

\begin{proposition} 	\label{p:fix}
	Let $T: \mathbb R^d\to  \mathbb R^d$ be a quasi $\alpha$-firmly nonexpansive operator. 
	Then $\Fix\mathscr T_T \neq\emptyset$ and in particular $\nu \in \Fix\mathscr T_T$ if $\supp(\nu) \subseteq \Fix T$.
\end{proposition}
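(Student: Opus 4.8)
The plan is to reduce everything to one elementary observation about push-forwards that are concentrated on the fixed point set of $T$. First I would record that, by the very definition of a quasi $\alpha$-firmly nonexpansive operator, $\Fix T \neq \emptyset$; fix any $x_0 \in \Fix T$.

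The heart of the matter is the \emph{in particular} statement, so I would prove that first. Assume $\supp(\nu) \subseteq \Fix T$ and fix a Borel set $A \in \mathcal B(\R^d)$. Since $\nu(\R^d \setminus \supp\nu) = 0$, we have
$$
(\mathscr T_T(\nu))(A) = \nu(T^{-1}(A)) = \nu\big(T^{-1}(A)\cap \supp\nu\big).
$$
For $x \in \supp\nu \subseteq \Fix T$ one has $T(x) = x$, hence $x \in T^{-1}(A) \iff x \in A$, i.e. $T^{-1}(A)\cap\supp\nu = A\cap\supp\nu$. Substituting gives $(\mathscr T_T(\nu))(A) = \nu(A\cap\supp\nu) = \nu(A)$, and since $A$ was arbitrary, $\mathscr T_T(\nu) = \nu$, so $\nu \in \Fix\mathscr T_T$. (One should also check $T_\#\nu \in \mathcal P_2(\R^d)$ so that the statement is meaningful; from \eqref{eq:quasi-fne-operator} with $y = x_0$ one gets $\|Tx\| \le \|x\| + 2\|x_0\|$, so $T$ has at most linear growth — though here it is anyway immediate since $T_\#\nu = \nu$.)

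Finally, $\Fix\mathscr T_T \neq \emptyset$ is obtained by specializing to $\nu = \delta_{x_0}$: its support is $\{x_0\} \subseteq \Fix T$, so the previous paragraph applies (equivalently, $T_\#\delta_{x_0} = \delta_{T(x_0)} = \delta_{x_0}$ directly). I do not expect a genuine obstacle: the only delicate point is that a quasi $\alpha$-firmly nonexpansive $T$ need not be continuous, so $\Fix T$ need not be closed; but $\supp\nu$ is always closed and is, by hypothesis, contained in $\Fix T$, which is all the argument uses.
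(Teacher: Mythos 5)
Your proof is correct, but it takes a leaner route than the paper on the nonemptiness part. For the ``in particular'' claim you argue exactly as the paper does in spirit, except that you restrict to $\supp\nu$ (which is automatically closed, hence Borel, and carries full $\nu$-measure) rather than to $\Fix T$; the paper instead writes $\nu(T^{-1}(B))=\nu(T^{-1}(B)\cap\Fix T)=\nu(B\cap\Fix T)=\nu(B)$, which implicitly uses that $\Fix T$ is a Borel set — this is covered by the lemma it cites (\cite[Lemma 4.1]{Berd-Steidl}), which gives that $\Fix T$ is nonempty, closed and convex for a quasi $\alpha$-firmly nonexpansive $T$, no continuity needed (take $y_n\in\Fix T$, $y_n\to y$, and apply \eqref{eq:quasi-fne-operator} with $x=y$ to get $\|Ty-y\|=0$); so your caveat that $\Fix T$ ``need not be closed'' is not actually an issue, though your use of $\supp\nu$ sidesteps the citation entirely. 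For $\Fix\mathscr T_T\neq\emptyset$ you simply take $\nu=\delta_{x_0}$ with $x_0\in\Fix T$, which is clearly in $\mathcal P_2(\R^d)$ and settles the proposition as stated with no moment estimates. The paper instead constructs, for an arbitrary $\mu\in\mathcal P_2(\R^d)$, the measure $\nu=(P_{\Fix T})_\#\mu$, verifies it is a probability measure with finite second moment, and shows $\supp\nu\subseteq\Fix T$; this heavier construction is what it needs (closedness and convexity of $\Fix T$ to define the projection, plus the second-moment bound) and what it buys is the stronger conclusion recorded immediately afterwards as a corollary, namely $(P_{\Fix T})_\#\mathcal P_2(\R^d)\subseteq\Fix\mathscr T_T$, which your Dirac argument does not yield. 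Both arguments tacitly assume $T$ is Borel measurable so that $T_\#\nu$ makes sense, as does the paper's definition of $\mathscr T_T$, so that is not a gap on your side.
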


\begin{proof}
	Let $T: \mathbb R^d\to  \mathbb R^d$ be quasi $\alpha$-firmly nonexpansive mapping.
	By \cite[Lemma 4.1]{Berd-Steidl} it follows that $\Fix T$ is nonempty, closed and convex. 
	Let $\mu\in \mathcal P_2(\R^d)$ be arbitrary. 
	Let $f(x)\coloneqq P_{\Fix T}(x)$ for $x\in\mathbb R^d$,
	where $P_{\Fix T}$ denotes the metric projection onto $\Fix T$.
	Then we have for $\nu \coloneqq f_{\#}\mu$ that $\supp(\nu)\subseteq\Fix T$. 
 If a measure $\nu$ fulfills the later condition, then
	we obtain for
	every $B\in\mathcal B( \mathbb R^d)$ that 
	$$\mathscr T_T(\nu)(B) = (T_{\#}\nu)(B)=\nu(T^{-1}(B))=\nu(T^{-1}(B)\cap\Fix T)=\nu(B\cap \Fix T)=\nu(B),$$
	i.e. $\nu\in\Fix \mathscr T_T$. 
	It remains to show that $\nu\in\mathcal P_2(\R^d)$.
	By definition of $f$, we have  for any $B\in\mathcal B( \mathbb R^d)$ that
		$\nu(B)=\mu(f^{-1}(B))=\mu(f^{-1}(B\cap\Fix T))$. 
		It is evident that $\nu(B)\geq 0$ and that $\nu(\R^d)=\mu(f^{-1}(\Fix T))=\mu( \mathbb R^d)=1$. 
		Moreover, if $(B_n)_{n\in\mathbb N}$ is a countable family of disjoint Borel sets, 
		then so is the family $(B_n\cap\Fix T)_{n\in\mathbb N}$ and consequently
	\begin{align*}
	\nu(\bigcup_{n\in\mathbb N}B_n)&=\mu(f^{-1}(\bigcup_{n\in\mathbb N}B_n\cap \Fix T))\\&=\mu(\bigcup_{n\in\mathbb N}f^{-1}(B_n\cap\Fix T))=\sum_{n\in\mathbb N}\mu(f^{-1}(B_n\cap\Fix T))=\sum_{n\in\mathbb N}\nu(B_n).
	\end{align*} 
	Therefore $\nu$ is indeed a probability measure. 
	Next, consider
	\begin{align*}
	\int_{ \mathbb R^d}\|x\|^2\,d\nu(x)&=\int_{\Fix T}\|x\|^2\,d\mu(x)+\int_{ \mathbb R^d\setminus \Fix T}\|P_{\Fix T}x\|^2\,d\mu(x).
	\end{align*}	
	The second integral can be estimated with an arbitrary fixed $x_0\in\Fix T$ as follows:	
	\begin{align*}
	\int_{ \mathbb R^d\setminus \Fix T}\|P_{\Fix T}x\|^2\,d\mu(x)&\leq \int_{ \mathbb R^d\setminus \Fix T}(\|P_{\Fix T}x-x_0\|+\|x_0\|)^2\,d\mu(x)\\&
	\leq 2\,\Big(\int_{ \mathbb R^d\setminus \Fix T}\|P_{\Fix T}x-x_0\|^2\,d\mu(x)+\int_{ \mathbb R^d\setminus \Fix T}\|x_0\|^2\,d\mu(x)\Big)\\&
	\leq 2\,\Big(\int_{ \mathbb R^d\setminus \Fix T}\|x-x_0\|^2\,d\mu(x)+\int_{ \mathbb R^d\setminus \Fix T}\|x_0\|^2\,d\mu(x)\Big).
	\end{align*}	
	Since $\mu\in\mathcal P_2(\mathbb R^d)$ this completes the proof.	
\end{proof}
In particular we have showed the following relation.

\begin{corollary}
	If $T:\mathbb R^d\to\mathbb R^d$ is a quasi $\alpha$-firmly nonexpansive mapping, 
	then it holds $(P_{\Fix T})_{\#}\mathcal P_2(\mathbb R^d)\subseteq\Fix \mathscr T_T$.
\end{corollary}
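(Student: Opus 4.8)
The plan is to read the statement off from the proof of Proposition~\ref{p:fix}, which already does all the work for an arbitrary starting measure. Recall first that since $T$ is quasi $\alpha$-firmly nonexpansive, \cite[Lemma~4.1]{Berd-Steidl} guarantees that $\Fix T$ is nonempty, closed and convex; in particular the metric projection $f \coloneqq P_{\Fix T}:\mathbb R^d\to\mathbb R^d$ is a well-defined (single-valued, Borel measurable) map with $f(\mathbb R^d)\subseteq\Fix T$.

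Now fix an arbitrary $\mu\in\mathcal P_2(\mathbb R^d)$ and put $\nu\coloneqq f_\#\mu=(P_{\Fix T})_\#\mu$. Because $f$ takes values in $\Fix T$, we have $\supp(\nu)\subseteq\Fix T$. The proof of Proposition~\ref{p:fix} establishes two facts about such a $\nu$: (i) it is a probability measure with finite second moment, hence $\nu\in\mathcal P_2(\mathbb R^d)$ --- this is the $\sigma$-additivity computation together with the estimate for $\int\|P_{\Fix T}x\|^2\,d\mu(x)$ obtained from $\|P_{\Fix T}x-x_0\|\le\|x-x_0\|$ for a fixed $x_0\in\Fix T$; and (ii) whenever $\supp(\nu)\subseteq\Fix T$, one has $\mathscr T_T(\nu)(B)=\nu(T^{-1}(B)\cap\Fix T)=\nu(B\cap\Fix T)=\nu(B)$ for every $B\in\mathcal B(\mathbb R^d)$, i.e. $\mathscr T_T(\nu)=\nu$. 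Combining (i) and (ii) gives $\nu\in\Fix\mathscr T_T$.

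Since $\mu\in\mathcal P_2(\mathbb R^d)$ was arbitrary, this shows $(P_{\Fix T})_\#\mathcal P_2(\mathbb R^d)\subseteq\Fix\mathscr T_T$. There is no genuine obstacle here: the content is entirely contained in the proof of Proposition~\ref{p:fix}, and the only point worth emphasizing is that that argument was run for a generic $\mu$, so it in fact produces the whole family $(P_{\Fix T})_\#\mu$, $\mu\in\mathcal P_2(\mathbb R^d)$, of fixed points of $\mathscr T_T$. If one wanted the corollary to be self-contained, the single nontrivial estimate to reproduce is the second-moment bound for $\nu$; the marginal/pushforward identities and the $\sigma$-additivity check are routine bookkeeping.
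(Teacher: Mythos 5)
Your proposal is correct and follows exactly the paper's route: the corollary is stated there as an immediate read-off of the proof of Proposition \ref{p:fix}, where for an arbitrary $\mu\in\mathcal P_2(\mathbb R^d)$ the measure $\nu=(P_{\Fix T})_\#\mu$ is shown to lie in $\mathcal P_2(\mathbb R^d)$ and to satisfy $\mathscr T_T(\nu)=\nu$. Nothing is missing.
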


Let $\mu,\nu\in\mathcal P_2(\R^d)$ and $\pi\in\Pi(\mu,\nu)$. 
Let $\{\nu_x\}_{x\in \mathbb R^d}$ be the family of disintegrations of $\pi$ with respect to $\mu$ , i.e.
\begin{equation} \label{eq:disintegration}
\pi(A\times B)=\int_A\Big(\int_B\nu_x(y)\,dy\Big)\,d\mu(x) \quad\text{for all } A,B\in\mathcal B(\mathbb R^d),
\end{equation}

\begin{proposition}	\label{p:fix2}
	Let $T:\mathbb R^d\to \mathbb R^d$ be a quasi $\alpha$-firmly nonexpansive operator. 
	Suppose that for every $\nu\in\Fix\mathscr T_T$ there is 
	a family of disintegrating measures $\{\nu_x\}_{x\in \mathbb R^d}$ satisfying $\nu_x(\Fix T)\geq C_{\Fix T}$ 
	for some positive uniform constant $C_{\Fix T}$ possibly depending on $\Fix T$.
	Then $\mathscr T_T:\mathcal P_2(\R^d)\to\mathcal P_2(\R^d)$ is a quasi $\alpha$-firmly nonexpansive operator.
\end{proposition}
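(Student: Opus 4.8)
The plan is to reduce the Wasserstein inequality~\eqref{eq:firmly} for $\mathscr T_T$ to the pointwise inequality~\eqref{eq:quasi-fne-operator} for $T$, integrated against a suitable measure, by inserting explicit transport plans. Since $\Fix\mathscr T_T\neq\emptyset$ is already provided by Proposition~\ref{p:fix}, it suffices to show for arbitrary $\mu\in\mathcal P_2(\R^d)$ and $\nu\in\Fix\mathscr T_T$ that $W_2^2(\mathscr T_T(\mu),\nu)\le W_2^2(\mu,\nu)-\frac{1-\alpha}{\alpha}W_2^2(\mu,\mathscr T_T(\mu))$.

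First I would pick $\pi\in\Pi_{opt}(\mu,\nu)$, note that $(T,\Id)_\#\pi\in\Pi(\mathscr T_T(\mu),\nu)$, and disintegrate $\pi$ with respect to its first marginal $\mu$ into the kernels $\{\nu_x\}_{x\in\R^d}$ from the hypothesis, so that
\[
W_2^2(\mathscr T_T(\mu),\nu)\le\int_{\R^d\times\R^d}\|Tx-y\|^2\,d\pi(x,y)=\int_{\R^d}\Big(\int_{\R^d}\|Tx-y\|^2\,d\nu_x(y)\Big)\,d\mu(x).
\]
For $y\in\Fix T$ the quasi $\alpha$-firm nonexpansiveness~\eqref{eq:quasi-fne-operator} of $T$ gives the pointwise bound $\|Tx-y\|^2\le\|x-y\|^2-\frac{1-\alpha}{\alpha}\|x-Tx\|^2$. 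If each $\nu_x$, and hence $\pi$ and $\nu$, gives full mass to $\Fix T$, this may be integrated over $y$ against $\nu_x$ and then over $x$ against $\mu$, producing $W_2^2(\mathscr T_T(\mu),\nu)\le W_2^2(\mu,\nu)-\frac{1-\alpha}{\alpha}\int_{\R^d}\|x-Tx\|^2\,d\mu(x)$; and since $(\Id,T)_\#\mu\in\Pi(\mu,\mathscr T_T(\mu))$ we have $\int_{\R^d}\|x-Tx\|^2\,d\mu(x)\ge W_2^2(\mu,\mathscr T_T(\mu))$, so combining the two bounds closes the estimate.

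The main obstacle is therefore to control $T$ on the part of $\nu$ lying outside $\Fix T$, about which~\eqref{eq:quasi-fne-operator} is silent; everything hinges on showing that this part is empty, i.e.\ $\nu(\Fix T)=1$. I would derive this directly: fixing any $y_0\in\Fix T$ (the set $\Fix T$ being nonempty, closed and convex by \cite[Lemma~4.1]{Berd-Steidl}), the change of variables $z=Tx$ under $T_\#\nu=\nu$ yields $\int_{\R^d}\|Tx-y_0\|^2\,d\nu(x)=\int_{\R^d}\|z-y_0\|^2\,d\nu(z)$, so integrating~\eqref{eq:quasi-fne-operator} with $y=y_0$ against $\nu$ forces $\frac{1-\alpha}{\alpha}\int_{\R^d}\|x-Tx\|^2\,d\nu(x)\le0$, hence $Tx=x$ for $\nu$-a.e.\ $x$ and $\nu$ is concentrated on the closed set $\Fix T$. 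Then $\int_{\R^d}\nu_x(\Fix T)\,d\mu(x)=\nu(\Fix T)=1$ forces $\nu_x(\Fix T)=1$ for $\mu$-a.e.\ $x$, consistent with (and in fact sharpening) the assumed uniform bound $\nu_x(\Fix T)\ge C_{\Fix T}>0$, so the $\R^d\setminus\Fix T$ contribution in the split above indeed vanishes and the computation of the second paragraph applies. The only remaining care is routine: all integrals are finite since $\mu,\nu\in\mathcal P_2(\R^d)$ and $\|x-Tx\|^2\le\frac{\alpha}{1-\alpha}\|x-y_0\|^2$ by~\eqref{eq:quasi-fne-operator}, and the maps and sets entering the disintegration are Borel.
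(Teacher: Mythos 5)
Your proof is correct, and it takes a genuinely different route at the decisive point. The paper also reduces \eqref{eq:firmly} to integrating the pointwise inequality \eqref{eq:quasi-fne-operator} against an optimal plan $\pi\in\Pi_{opt}(\mu,\nu)$, but it handles the mass of $\nu$ outside $\Fix T$ by invoking Proposition \ref{p:fix} to claim $\supp(\nu)\subseteq\Fix T$ and then using the assumed disintegration bound $\nu_x(\Fix T)\geq C_{\Fix T}$, which yields the weaker conclusion that $\mathscr T_T$ is quasi $\hat\alpha$-firmly nonexpansive with $\hat\alpha=(1+C_{\Fix T}(1-\alpha)/\alpha)^{-1}$. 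You instead prove directly that every $\nu\in\Fix\mathscr T_T$ charges only $\Fix T$: combining the change of variables $\int\|Tx-y_0\|^2\,d\nu=\int\|x-y_0\|^2\,d\nu$ (from $T_\#\nu=\nu$) with \eqref{eq:quasi-fne-operator} at a single $y_0\in\Fix T$ forces $\int\|x-Tx\|^2\,d\nu=0$, hence $\nu(\Fix T)=1$ and $\nu_x(\Fix T)=1$ for $\mu$-a.e.\ $x$. This buys two things: it closes a gap in the paper's own appeal to Proposition \ref{p:fix}, which as stated only gives the implication $\supp(\nu)\subseteq\Fix T\Rightarrow\nu\in\Fix\mathscr T_T$ and not the converse used in the proof; and it shows the disintegration hypothesis is automatically satisfied with constant $1$, so you obtain the inequality with the original parameter $\alpha$ rather than $\hat\alpha$, a strictly sharper statement whenever $C_{\Fix T}<1$. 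The remaining steps (the plans $(T,\Id)_\#\pi$ and $(\Id,T)_\#\mu$ to compare the integrals with $W_2^2(\mathscr T_T(\mu),\nu)$ and $W_2^2(\mu,\mathscr T_T(\mu))$, and the finiteness bound $\|x-Tx\|^2\le\frac{\alpha}{1-\alpha}\|x-y_0\|^2$ justifying the rearrangement) are the same in spirit as the paper's and are handled correctly.
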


\begin{proof}
	Let $\mu\in\mathcal P_2(\R^d)$ and $\nu\in\Fix\mathscr T_T$ satisfying $\nu_x(\Fix T)\geq C_{\Fix T}$ 
	uniformly for some positive constant $C_{\Fix T}$.
	Let $\pi\in\Pi_{opt}(\mu,\nu)$. 
From Proposition \ref{p:fix} we have that $\supp(\nu)\subseteq\Fix T$, therefore
	\begin{align*}
	W_2^2(\mathscr T_T(\mu),\mathscr T_T(\nu))&\leq
	\int_{\mathbb R^d\times \Fix T}\|Tx-Ty\|^2\,d\pi(x,y)\\&\leq \int_{\mathbb R^d\times \Fix T}\|x-y\|^2\,d\pi(x,y)
	\quad -\frac{1-\alpha}{\alpha}\int_{\mathbb R^d\times \Fix T}\|x-Tx\|^2\,d\pi(x,y).
	\end{align*}
	From the assumption on the disintegration we estimate the second integral from below as
	\begin{align*}
	\int_{\mathbb R^d\times \Fix T}\|x-Tx\|^2\,d\pi(x,y)
	&=\int_{\mathbb R^d}\|x-Tx\|^2\Big(\int_{\Fix T}\nu_x(y)\,dy\Big)\,d\mu(x)
	\\&=\int_{\mathbb R^d}\|x-Tx\|^2\nu_x(\Fix T)\,d\mu(x)
	\geq C_{\Fix T}\int_{\mathbb R^d}\|x-Tx\|^2\,d\mu(x).
	\end{align*}
	Together with $\nu\in\Fix\mathscr T_T$, this implies
	\begin{align*}
	W_2^2(\mathscr T_T(\mu),\mathscr T_T(\nu))=	W_2^2(\mathscr T_T(\mu),\nu)&\leq W_2^2(\mu,\nu)-\frac{1-\alpha}{\alpha}\,C_{\Fix T}\int_{\mathbb R^d}\|x-Tx\|^2\,d\mu(x)\\
	&\leq W_2^2(\mu,\nu)-\frac{1-\alpha}{\alpha}\,C_{\Fix T}\,W_2^2(\mu,\mathscr T_T(\mu)).
	\end{align*}
	Taking $\hat\alpha = (1+C_{\Fix T}(1-\alpha)/\alpha)^{-1}$, this completes the proof.
\end{proof}

In the particular case when $\Fix T$ consists of a unique element $x_0\in \mathbb R^d$, 
the previous result holds without any disintegration condition as shown below.

\begin{corollary}
	\label{p:unique-fix}
	Let $T:\mathbb R^d\to \mathbb R^d$ be a quasi $\alpha$-firmly nonexpansive operator such that $\Fix T=\{x_0\}$ for some $x_0\in\mathbb R^d$. 
	Then $\nu_0 \coloneqq \delta_{x_0} \in\mathcal P_2(\mathbb R^d)$ is a fixed point of the push-forward operator $\mathscr T_T$ and fulfills
	$$
	W^2_2(\mathscr T_T(\mu),\mathscr T_T(\nu_0)) \leq W^2_2(\mu,\nu_0)-\frac{1-\alpha}{\alpha}\,W^2_2(\mu,\mathscr T_T(\mu)).
	$$
\end{corollary}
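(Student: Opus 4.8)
The plan is to reduce the statement to the elementary observation that a transport plan with a Dirac marginal is unique, which turns the relevant Wasserstein distances into explicit integrals against $\mu$.

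First I would check the two easy assertions. Since $\int_{\mathbb R^d}\|x\|^2\,d\delta_{x_0}(x)=\|x_0\|^2<\infty$, we have $\nu_0=\delta_{x_0}\in\mathcal P_2(\mathbb R^d)$, and because $x_0\in\Fix T$ we get $\mathscr T_T(\nu_0)=T_\#\delta_{x_0}=\delta_{T(x_0)}=\delta_{x_0}=\nu_0$, so $\nu_0\in\Fix\mathscr T_T$. (Alternatively, this is immediate from Proposition \ref{p:fix}, as $\supp(\nu_0)=\{x_0\}\subseteq\Fix T$.)

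Next, for any $\mu\in\mathcal P_2(\mathbb R^d)$ the only element of $\Pi(\mu,\delta_{x_0})$ is the plan $(\Id,c_{x_0})_\#\mu$ with $c_{x_0}\equiv x_0$, so
\[
W_2^2(\mu,\nu_0)=\int_{\mathbb R^d}\|x-x_0\|^2\,d\mu(x),
\]
and likewise, since $T_\#\mu\in\mathcal P_2(\mathbb R^d)$ (note $\|Tx-x_0\|\le\|x-x_0\|$ by \eqref{eq:quasi-fne-operator}),
\[
W_2^2(\mathscr T_T(\mu),\nu_0)=\int_{\mathbb R^d}\|Tx-x_0\|^2\,d\mu(x).
\]
Now apply \eqref{eq:quasi-fne-operator} with $y=x_0$: for every $x\in\mathbb R^d$,
\[
\|Tx-x_0\|^2\le\|x-x_0\|^2-\frac{1-\alpha}{\alpha}\,\|x-Tx\|^2.
\]
Integrating against $\mu$ and using the two identities above gives
\[
W_2^2(\mathscr T_T(\mu),\nu_0)\le W_2^2(\mu,\nu_0)-\frac{1-\alpha}{\alpha}\int_{\mathbb R^d}\|x-Tx\|^2\,d\mu(x).
\]
Finally, $(\Id,T)_\#\mu\in\Pi(\mu,T_\#\mu)=\Pi(\mu,\mathscr T_T(\mu))$ is an admissible (though in general not optimal) transport plan, so $W_2^2(\mu,\mathscr T_T(\mu))\le\int_{\mathbb R^d}\|x-Tx\|^2\,d\mu(x)$; substituting this lower bound into the previous display and recalling $\mathscr T_T(\nu_0)=\nu_0$ yields the asserted inequality.

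I do not expect a genuine obstacle: the corollary is really the special case of Proposition \ref{p:fix2} with $C_{\Fix T}=1$, since every $\pi\in\Pi(\mu,\delta_{x_0})$ disintegrates as $\nu_x=\delta_{x_0}$ and hence $\nu_x(\Fix T)=1$, giving $\hat\alpha=(1+(1-\alpha)/\alpha)^{-1}=\alpha$. The only point that needs to be stated with some care is the uniqueness of the transport plan with a Dirac marginal, which is what makes the two Wasserstein distances to $\nu_0$ explicit.
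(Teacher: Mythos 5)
Your proof is correct and follows essentially the same route as the paper's: identify $\nu_0=\delta_{x_0}$ as a fixed point of $\mathscr T_T$, use that the only plan in $\Pi(\mu,\delta_{x_0})$ is the product plan to make $W_2^2(\mu,\nu_0)$ explicit, integrate the pointwise inequality \eqref{eq:quasi-fne-operator} with $y=x_0$, and bound $W_2^2(\mu,\mathscr T_T(\mu))$ from above by $\int\|x-Tx\|^2\,d\mu$ via the admissible plan $(\Id,T)_\#\mu$. The closing observation that this is Proposition \ref{p:fix2} with $C_{\Fix T}=1$, hence $\hat\alpha=\alpha$, is a nice extra but not part of the paper's argument.
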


\begin{proof}
	Let $\Fix T=\{x_0\}$ for some $x_0\in\mathbb R^d$. Then, by the proof of Proposition \ref{p:fix}, 
	there is $\nu_0\in\Fix\mathscr T_T$ with $\supp(\nu_0)\subseteq \{x_0\}$, i.e. $\nu_0=\delta_{x_0}$. 
	For any $\mu\in\mathcal P_2(\mathbb R^d)$ the only transport plan between $\mu$ and $\nu_0$ is 
	$\pi=\mu\otimes \delta_{x_0}$. By similar calculations as in Proposition \ref{p:fix2} we obtain 
	\begin{align*}
	W^2_2(\mathscr T_T(\mu),\mathscr T_T(\nu_0))
	&\leq \int_{\mathbb R^d\times\{x_0\}}\|Tx-Tx_0\|^2\,d\pi(x,x_0)=\int_{\mathbb R^d}\|Tx-x_0\|^2\,d\mu(x)\\
	&\leq\int_{\mathbb R^d}\|x-x_0\|^2\,d\mu(x) -\frac{1-\alpha}{\alpha}\,\int_{\mathbb R^d}\|Tx-x\|^2\,d\mu(x)\\&
	\leq W^2_2(\mu,\nu_0)-\frac{1-\alpha}{\alpha}\,W^2_2(\mu,\mathscr T_T(\mu)).
	\end{align*}
\end{proof}

We conclude this section by showing that quasi $\alpha$-firmly nonexpansiveness 
is well behaved under the composition of mappings that share at least a common fixed point. 
The proofs are modifications of arguments from \cite{Berdellima} for our setting.

\begin{lemma}	\label{l:intersections}
	Let $\mathscr S,\mathscr T:\mathcal P_2(\R^d)\to \mathcal P_2(\R^d)$ satisfy $\Fix \mathscr T\cap\Fix \mathscr S\neq\emptyset$.
	If $\mathscr S$ is quasi $\alpha$-firmly nonexpansive and 
	$\mathscr T$ is quasi nonexpansive, then 
	$\Fix (\mathscr T \circ \mathscr S)=\Fix \mathscr T\cap \Fix \mathscr S$.
\end{lemma}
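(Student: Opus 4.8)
The plan is to prove the two inclusions separately, with the trivial one first and the substantive one via a chaining of the defining inequalities.

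First I would establish $\Fix \mathscr T\cap \Fix \mathscr S\subseteq \Fix(\mathscr T\circ\mathscr S)$: if $\mathscr S(\mu)=\mu$ and $\mathscr T(\mu)=\mu$, then $(\mathscr T\circ\mathscr S)(\mu)=\mathscr T(\mathscr S(\mu))=\mathscr T(\mu)=\mu$, so $\mu\in\Fix(\mathscr T\circ\mathscr S)$. This needs no hypothesis beyond the definitions.

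For the reverse inclusion, fix any common fixed point $\omega\in\Fix\mathscr T\cap\Fix\mathscr S$, which exists by assumption, and let $\mu\in\Fix(\mathscr T\circ\mathscr S)$, i.e. $\mathscr T(\mathscr S(\mu))=\mu$. Since $\mathscr T$ is quasi nonexpansive and $\omega\in\Fix\mathscr T$, apply \eqref{nonexp} with the pair $(\mathscr S(\mu),\omega)$ to get $W_2(\mu,\omega)=W_2(\mathscr T(\mathscr S(\mu)),\omega)\le W_2(\mathscr S(\mu),\omega)$. Since $\mathscr S$ is quasi $\alpha$-firmly nonexpansive and $\omega\in\Fix\mathscr S$, inequality \eqref{eq:firmly} gives
\begin{equation*}
W_2^2(\mathscr S(\mu),\omega)\le W_2^2(\mu,\omega)-\frac{1-\alpha}{\alpha}\,W_2^2(\mu,\mathscr S(\mu)).
\end{equation*}
Combining the two displays yields $W_2^2(\mu,\omega)\le W_2^2(\mu,\omega)-\frac{1-\alpha}{\alpha}\,W_2^2(\mu,\mathscr S(\mu))$, hence $\frac{1-\alpha}{\alpha}\,W_2^2(\mu,\mathscr S(\mu))\le 0$. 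As $\alpha\in(0,1)$, this forces $W_2(\mu,\mathscr S(\mu))=0$, i.e. $\mathscr S(\mu)=\mu$; consequently $\mu=\mathscr T(\mathscr S(\mu))=\mathscr T(\mu)$, so $\mu\in\Fix\mathscr T\cap\Fix\mathscr S$.

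There is no real obstacle here: the only point worth care is that the subtracted term in the $\alpha$-firm inequality, together with the "$\le$" chain produced by quasi nonexpansiveness of $\mathscr T$ and the identity $\mathscr T(\mathscr S(\mu))=\mu$, collapses to equality and thereby annihilates $W_2(\mu,\mathscr S(\mu))$; everything else is bookkeeping with the definitions.
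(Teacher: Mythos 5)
Your proof is correct and uses essentially the same argument as the paper: the chain $W_2^2(\mu,\omega)=W_2^2(\mathscr T(\mathscr S(\mu)),\omega)\le W_2^2(\mathscr S(\mu),\omega)\le W_2^2(\mu,\omega)-\frac{1-\alpha}{\alpha}W_2^2(\mu,\mathscr S(\mu))$ forcing $\mathscr S(\mu)=\mu$ and then $\mathscr T(\mu)=\mu$. The only difference is cosmetic: you run this chain directly for every $\mu\in\Fix(\mathscr T\circ\mathscr S)$, whereas the paper first splits into cases ($\mathscr S(\mu)\in\Fix\mathscr T$ or not, $\mu\in\Fix\mathscr S$ or not) and derives a contradiction in the last case; your streamlined version is fine.
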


\begin{proof} 
The inclusion $\Fix \mathscr T\cap \Fix \mathscr S\subseteq \Fix (\mathscr T \circ \mathscr S)$ is obvious. 
Now let $\mu\in\Fix (\mathscr T\circ\mathscr S)$. First, suppose that $\mathscr S(\mu)\in \Fix \mathscr T$. 
Then $\mathscr S(\mu)=\mathscr T(\mathscr S(\mu))=\mu$ implies $\mu\in \Fix \mathscr T\cap \Fix \mathscr S$. Next, let $\mathscr S(\mu)\notin\Fix \mathscr T$.
Then we distinguish two subcases $\mu\in \Fix \mathscr S$ and $\mu\notin\Fix \mathscr S$.
If $\mu\in \Fix \mathscr S$, then $\mu=\mathscr T(\mathscr S(\mu))=\mathscr T(\mu)$ 
implies $\mu\in \Fix \mathscr T\cap \Fix \mathscr S$. 
Finally, let $\mu\notin \Fix \mathscr S$ and take $\nu\in \Fix \mathscr T\cap \Fix \mathscr S$. 
This yields 
$$
W^2_2(\mu,\nu)=W^2_2(\mathscr T(\mathscr S(\mu)),\mathscr T(\nu))\leq W^2_2(\mathscr S(\mu),\nu)
\leq W^2_2(\mu,\nu)-\frac{1-\alpha}{\alpha}W^2_2(\mu,\mathscr S(\mu)),
$$
which implies $\mathscr S(\mu) = \mu$, a contradiction.
\end{proof}

\begin{proposition}	\label{p:compositions}
	Let $\mathscr S:\mathcal P_2(\R^d)\to \mathcal P_2(\R^d)$ be quasi $\alpha$-firmly nonexpansive 
	and 
	let $\mathscr T:\mathcal P_2(\R^d)\to \mathcal P_2(\R^d)$ be quasi $\beta$-firmly nonexpansive.  
	Suppose that 
	$\Fix \mathscr T\cap \Fix \mathscr S\neq\emptyset$.   
	Then
	$\mathscr T \circ \mathscr S$ is quasi $\gamma$-firmly nonexpansive  with  
	\begin{equation}	\label{eq:composition}
	\gamma\coloneqq \frac{\alpha+\beta-2\alpha\beta}{1-\alpha\beta}.
	\end{equation}
\end{proposition}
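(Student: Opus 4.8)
The plan is to peel off the two mappings one at a time, reduce everything to a scalar inequality in two nonnegative quantities, and choose the right weight. First I would observe that a quasi $\beta$-firmly nonexpansive mapping is in particular quasi nonexpansive, so Lemma~\ref{l:intersections} applies to $\mathscr S$ and $\mathscr T$ and yields $\Fix(\mathscr T\circ\mathscr S)=\Fix\mathscr T\cap\Fix\mathscr S\neq\emptyset$. Thus the fixed point set of $\mathscr T\circ\mathscr S$ is nonempty, and every $\nu\in\Fix(\mathscr T\circ\mathscr S)$ is simultaneously a fixed point of $\mathscr S$ and of $\mathscr T$; this is exactly what makes both defining inequalities \eqref{eq:firmly} available with the \emph{same} point $\nu$.

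Next, fix $\mu\in\mathcal P_2(\R^d)$ and $\nu\in\Fix(\mathscr T\circ\mathscr S)$, and abbreviate $a\coloneqq W_2(\mu,\mathscr S(\mu))$ and $b\coloneqq W_2(\mathscr S(\mu),\mathscr T(\mathscr S(\mu)))$. Applying \eqref{eq:firmly} for $\mathscr T$ at the point $\mathscr S(\mu)$ with fixed point $\nu$, then \eqref{eq:firmly} for $\mathscr S$ at the point $\mu$ with fixed point $\nu$, and substituting the second estimate into the first, I obtain
\[
W_2^2(\mathscr T(\mathscr S(\mu)),\nu)\le W_2^2(\mu,\nu)-\frac{1-\alpha}{\alpha}\,a^2-\frac{1-\beta}{\beta}\,b^2 .
\]
It therefore remains to show $\tfrac{1-\gamma}{\gamma}\,W_2^2(\mu,\mathscr T(\mathscr S(\mu)))\le \tfrac{1-\alpha}{\alpha}a^2+\tfrac{1-\beta}{\beta}b^2$ with $\gamma$ as in \eqref{eq:composition}. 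By the triangle inequality for $W_2$ we have $W_2(\mu,\mathscr T(\mathscr S(\mu)))\le a+b$, so it suffices to prove the purely scalar inequality $\tfrac{1-\gamma}{\gamma}(a+b)^2\le p\,a^2+q\,b^2$ for all $a,b\ge 0$, where $p\coloneqq\tfrac{1-\alpha}{\alpha}$ and $q\coloneqq\tfrac{1-\beta}{\beta}$.

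For the scalar inequality I would use the weighted Young inequality $2ab\le \tfrac{p}{q}a^2+\tfrac{q}{p}b^2$, which gives $(a+b)^2\le \tfrac{p+q}{pq}\,(p\,a^2+q\,b^2)$; hence we may take $\tfrac{1-\gamma}{\gamma}=\tfrac{pq}{p+q}$. Substituting $p$ and $q$ and simplifying, using $p+q=\tfrac{\alpha+\beta-2\alpha\beta}{\alpha\beta}$ and $pq=\tfrac{(1-\alpha)(1-\beta)}{\alpha\beta}$, turns $\tfrac{1-\gamma}{\gamma}=\tfrac{pq}{p+q}=\tfrac{(1-\alpha)(1-\beta)}{\alpha+\beta-2\alpha\beta}$ into precisely $\gamma=\tfrac{\alpha+\beta-2\alpha\beta}{1-\alpha\beta}$, since $(1-\alpha)(1-\beta)+(\alpha+\beta-2\alpha\beta)=1-\alpha\beta$. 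Finally one checks $\gamma\in(0,1)$: the numerator equals $\alpha(1-\beta)+\beta(1-\alpha)>0$, and $(1-\alpha\beta)-(\alpha+\beta-2\alpha\beta)=(1-\alpha)(1-\beta)>0$. The whole argument is essentially routine bookkeeping; the only two non-mechanical points are (i) invoking Lemma~\ref{l:intersections} so that $\nu$ is genuinely a common fixed point and both firmness inequalities may legitimately be used, and (ii) recognizing that the right constant in $c(a+b)^2\le p a^2+q b^2$ is the parallel-type mean $\tfrac{pq}{p+q}$, which is exactly the source of the formula for $\gamma$; beyond this there is no substantial obstacle.
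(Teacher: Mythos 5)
Your proposal is correct and follows essentially the same route as the paper's proof: reduce to the scalar inequality $\tfrac{1-\gamma}{\gamma}W_2^2(\mu,\mathscr T(\mathscr S(\mu)))\le \tfrac{1-\alpha}{\alpha}a^2+\tfrac{1-\beta}{\beta}b^2$ via Lemma~\ref{l:intersections} and the two firmness inequalities, then conclude with the triangle inequality and the weighted AM--GM estimate, which is exactly the paper's $\kappa a^2+\kappa^{-1}b^2\ge 2ab$ step in the notation $\kappa=p/q$. Your explicit identification $\tfrac{1-\gamma}{\gamma}=\tfrac{pq}{p+q}$ and the check that $\gamma\in(0,1)$ only make transparent what the paper leaves implicit in its chain of equivalences.
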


\begin{proof} 
	By Lemma \ref{l:intersections}, assumption 
	$\Fix \mathscr T\cap\Fix \mathscr S\neq\emptyset$ implies that 
	$\Fix (\mathscr T \circ \mathscr S)=\Fix \mathscr T\cap\Fix \mathscr S$. 
	Let $\nu\in \Fix (\mathscr T \circ \mathscr S)$ and $\mu\in \mathcal P_2(\R^d)$. 
	Then an application of \eqref{eq:firmly} yields
	\begin{align*}
	W^2_2(\mathscr T(\mathscr S(\mu)),\nu)&\leq W_2^2(\mathscr S(\mu),\nu)-\frac{1-\beta}{\beta}W^2_2(\mathscr S(\mu),\mathscr T(\mathscr S(\mu)))\\&\leq W^2_2(\mu,\nu)-\frac{1-\alpha}{\alpha}W^2_2(\mu,\mathscr S(\mu))-\frac{1-\beta}{\beta}W^2_2(\mathscr S(\mu),\mathscr T(\mathscr S(\mu)).
	\end{align*}
	It suffices to show that 
	\begin{equation}
	\label{eq:condition}
	\frac{1-\gamma}{\gamma}W^2_2(\mu,\mathscr T(\mathscr S(\mu))\leq \frac{1-\alpha}{\alpha}W^2_2(\mu,\mathscr S(\mu))+\frac{1-\beta}{\beta}W^2_2(\mathscr S(\mu),\mathscr T(\mathscr S(\mu)).
	\end{equation}
With $\tau\coloneqq (1-\alpha)/\alpha+(1-\beta)/\beta$
inequality \eqref{eq:condition} is equivalent to
	\begin{align*}
	\Big(&\frac{1-\alpha}{\tau\alpha}\Big)^2W^2_2(\mu,\mathscr S(\mu))
	+\Big(\frac{1-\beta}{\tau\beta}\Big)^2W^2_2(\mathscr S(\mu),\mathscr T(\mathscr S(\mu))\\
	&+\Big(\frac{1-\alpha}{\tau\alpha}\Big)\Big(\frac{1-\beta}{\tau\beta}\Big)\Big(W^2_2(\mu,\mathscr S(\mu))
	+W^2_2(\mathscr S(\mu),\mathscr T(\mathscr S(\mu))-W^2_2(\mu,\mathscr T(\mathscr S(\mu))\Big)\geq 0.
	\end{align*}
Setting
	$\kappa\coloneqq \displaystyle\frac{1-\alpha}{\alpha}/\frac{1-\beta}{\beta}$,
this is equivalent to
	$$(\kappa+1)W^2_2(\mu,\mathscr S(\mu))+\frac{\kappa+1}{\kappa}W^2_2(\mathscr S(\mu),\mathscr T(\mathscr S(\mu))-W^2_2(\mu,\mathscr T(\mathscr S(\mu))\geq 0.$$
	The elementary inequality
	$$\kappa W^2_2(\mu,\mathscr S(\mu))+\frac{1}{\kappa}W^2_2(\mathscr S(\mu),\mathscr T(\mathscr S(\mu))\geq 2\,W_2(\mu,\mathscr S(\mu))W_2(\mathscr S(\mu),\mathscr T(\mathscr S(\mu))$$ for all $\kappa>0$
	together with the triangle inequality 
	$$W_2(\mu,\mathscr S(\mu))+W_2(\mathscr S(\mu),\mathscr T(\mathscr S(\mu))\geq W_2(\mu,\mathscr T(\mathscr S(\mu))$$
	implies
	\begin{align*}
	(\kappa+1)W^2_2(\mu,\mathscr S(\mu))&+\frac{\kappa+1}{\kappa}W^2_2(\mathscr S(\mu),\mathscr T(\mathscr S(\mu))\\
	&\geq (W_2(\mu,\mathscr S(\mu))+W_2(\mathscr S(\mu),\mathscr T(\mathscr S(\mu)))^2\geq W^2_2(\mu,\mathscr T(\mathscr S(\mu)). 
	\end{align*} 
\end{proof}

The following corollary is an immediate consequence of the above proposition.

\begin{corollary}
	Let $(\mathscr T_i)_{i=1}^N$ be a finite family 
	of quasi $\alpha_i$-firmly nonexpansive mappings. 
	Suppose that their fixed point sets have a nonempty intersection. 
	Then $\mathscr T=\mathscr T_{i_n} \circ \mathscr T_{i_{n-1}} \circ ... \circ \mathscr T_{i_1}$,
	where $i_j\in\{1,2,...,n\}$ are distinct, is also quasi $\alpha$-firmly nonexpansive for some $\alpha\in(0,1)$ dependent on $\alpha_i$.
\end{corollary}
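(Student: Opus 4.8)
The plan is to argue by induction on the length $m$ of the composition, taking as inductive invariant the conjunction of two statements: that $\mathscr S_m \coloneqq \mathscr T_{i_m} \circ \cdots \circ \mathscr T_{i_1}$ is quasi $\alpha$-firmly nonexpansive for some $\alpha \in (0,1)$ depending only on $\alpha_{i_1},\dots,\alpha_{i_m}$, and that $\Fix \mathscr S_m = \bigcap_{j=1}^m \Fix \mathscr T_{i_j}$. For $m=1$ both statements hold by hypothesis. It is the second clause of the invariant that makes the induction run, since it is exactly what is needed to verify the fixed-point hypotheses of Lemma \ref{l:intersections} and Proposition \ref{p:compositions} at the next stage.

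For the inductive step, assume the invariant for $\mathscr S_m$ and consider $\mathscr S_{m+1} = \mathscr T_{i_{m+1}} \circ \mathscr S_m$. First I would record the elementary fact that every quasi $\beta$-firmly nonexpansive mapping is quasi nonexpansive: discarding the nonnegative term $\tfrac{1-\beta}{\beta} W_2^2(\mu,\mathscr T(\mu))$ in \eqref{eq:firmly} gives $W_2(\mathscr T(\mu),\nu) \le W_2(\mu,\nu)$ for all $\mu$ and all $\nu \in \Fix \mathscr T$; in particular $\mathscr T_{i_{m+1}}$ is quasi nonexpansive. Next, by the inductive invariant $\Fix \mathscr S_m = \bigcap_{j=1}^m \Fix \mathscr T_{i_j} \supseteq \bigcap_{i=1}^N \Fix \mathscr T_i \neq \emptyset$, so $\Fix \mathscr T_{i_{m+1}} \cap \Fix \mathscr S_m \neq \emptyset$. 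Lemma \ref{l:intersections}, applied with $\mathscr S_m$ quasi $\alpha$-firmly nonexpansive and $\mathscr T_{i_{m+1}}$ quasi nonexpansive, then yields $\Fix \mathscr S_{m+1} = \Fix \mathscr T_{i_{m+1}} \cap \Fix \mathscr S_m = \bigcap_{j=1}^{m+1}\Fix \mathscr T_{i_j}$, which restores the second clause. Finally, Proposition \ref{p:compositions} (with its $\mathscr S$ being $\mathscr S_m$ of parameter $\alpha$ and its $\mathscr T$ being $\mathscr T_{i_{m+1}}$ of parameter $\alpha_{i_{m+1}}$) gives that $\mathscr S_{m+1}$ is quasi $\gamma$-firmly nonexpansive with
$$\gamma = \frac{\alpha + \alpha_{i_{m+1}} - 2\alpha\,\alpha_{i_{m+1}}}{1 - \alpha\,\alpha_{i_{m+1}}}.$$
To continue the induction one must check $\gamma \in (0,1)$: for $\alpha,\beta \in (0,1)$ one has $1 - \alpha\beta > 0$, the numerator equals $\alpha(1-\beta) + \beta(1-\alpha) > 0$, and $\gamma < 1$ rearranges to $(1-\alpha)(1-\beta) > 0$. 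Iterating from $i_1$ to $i_n$ produces the asserted $\alpha \in (0,1)$, obtained as the $(n-1)$-fold nesting of the map $(\alpha,\beta) \mapsto (\alpha+\beta-2\alpha\beta)/(1-\alpha\beta)$ applied successively to $\alpha_{i_1},\dots,\alpha_{i_n}$.

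The only part requiring genuine care rather than routine bookkeeping is ensuring that the common-fixed-point hypothesis needed by Proposition \ref{p:compositions} persists through every stage of the composition; this is precisely the purpose of carrying $\Fix \mathscr S_m = \bigcap_j \Fix \mathscr T_{i_j}$ as part of the inductive invariant and invoking Lemma \ref{l:intersections} at each step. The rest is the elementary constant-tracking above and the observation that quasi $\alpha$-firm nonexpansiveness implies quasi nonexpansiveness.
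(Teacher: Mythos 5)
Your proof is correct and follows exactly the route the paper intends: the paper states the corollary as an immediate consequence of Proposition \ref{p:compositions}, i.e., iterating that proposition (with Lemma \ref{l:intersections} guaranteeing the common-fixed-point hypothesis at each stage), which is precisely your induction. Your added details — carrying $\Fix \mathscr S_m = \bigcap_{j\le m}\Fix \mathscr T_{i_j}$ as the invariant and checking $\gamma\in(0,1)$ — are the bookkeeping the paper leaves implicit, and they are carried out correctly.
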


\section{Fixed point theory of quasi $\alpha$-firmly nonexpansive mappings}\label{sec:fix}
\subsection{Opial's property and Fej\'er monotonicity}
A mapping $\mathscr T:\mathcal P_2(\R^d)\to\mathcal P_2(\R^d)$ is \emph{asymptotic regular at} $\mu\in\mathcal P_2(\R^d)$, 
if 
\begin{equation} \label{eq:asymptotic-regular}
\lim_{n\to+\infty} W_2(\mathscr T^{n+1}(\mu),\mathscr T^n(\mu))=0.
\end{equation}
Here $\mathscr T^n:=\underbrace{\mathscr T\circ\cdots\circ\mathscr T}_{n-\text{times}}$.  
If the limit in \eqref{eq:asymptotic-regular} holds for every $\mu\in\mathcal P_2(\R^d)$, 
then $\mathscr T$ is said to be \emph{asymptotic regular} on $\mathcal P_2(\R^d)$. 
An immediate consequence of quasi $\alpha$-firmly nonexpansiveness is the following lemma.

\begin{lemma}	\label{l:asympotic-reg}
	Let $\mathscr T:\mathcal P_2(\R^d)\to\mathcal P_2(\R^d)$ 
	be a quasi $\alpha$-firmly nonexpansive mapping. 
	Then $\mathscr T$ is asymptotic regular on $\mathcal P_2(\R^d)$.
\end{lemma}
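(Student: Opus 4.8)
The plan is to fix a point $\hat\mu \in \Fix \mathscr T$, which exists because $\Fix \mathscr T \neq \emptyset$ is built into the definition of a quasi $\alpha$-firmly nonexpansive mapping, and to track the orbit $\mu_n \coloneqq \mathscr T^n(\mu)$ of an arbitrary $\mu \in \mathcal P_2(\R^d)$. Each $\mu_n$ again lies in $\mathcal P_2(\R^d)$, so the defining inequality \eqref{eq:firmly} applies with $\mu$ replaced by $\mu_n$ and $\nu = \hat\mu$, which gives
\[
W_2^2(\mu_{n+1},\hat\mu) \le W_2^2(\mu_n,\hat\mu) - \frac{1-\alpha}{\alpha}\, W_2^2(\mu_n,\mu_{n+1}).
\]
Since $(1-\alpha)/\alpha > 0$ for $\alpha \in (0,1)$, this shows in particular that $\bigl(W_2^2(\mu_n,\hat\mu)\bigr)_{n\in\N}$ is a nonincreasing sequence of nonnegative reals, hence convergent; call its limit $\ell \ge 0$.

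Next I would rearrange the displayed inequality to isolate the distance between consecutive iterates,
\[
\frac{1-\alpha}{\alpha}\, W_2^2(\mu_n,\mu_{n+1}) \le W_2^2(\mu_n,\hat\mu) - W_2^2(\mu_{n+1},\hat\mu),
\]
whose right-hand side tends to $\ell - \ell = 0$ as $n\to\infty$. Hence $W_2^2(\mu_n,\mu_{n+1}) \to 0$, i.e. $W_2(\mathscr T^{n+1}(\mu),\mathscr T^n(\mu)) \to 0$, and since $\mu$ was arbitrary, $\mathscr T$ is asymptotic regular on all of $\mathcal P_2(\R^d)$. Alternatively one can sum the telescoping inequality over $n$ to obtain $\sum_{n} W_2^2(\mu_n,\mu_{n+1}) \le \frac{\alpha}{1-\alpha}\, W_2^2(\mu,\hat\mu) < \infty$, which yields the conclusion at once.

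There is essentially no hard part here: the only points to check are that the orbit stays in $\mathcal P_2(\R^d)$ (immediate, as $\mathscr T$ maps into $\mathcal P_2(\R^d)$) and that a fixed point is available to play the role of $\nu$ in \eqref{eq:firmly} (guaranteed by the hypothesis $\Fix \mathscr T \neq \emptyset$). The argument is a verbatim transcription of the Hilbert-space proof; the fact that $(\mathcal P_2(\R^d),W_2)$ is merely a metric space, with positive Alexandrov curvature for $d \ge 2$, plays no role, since we never compare two distinct orbits nor invoke any geodesic structure.
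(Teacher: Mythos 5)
Your proposal is correct and follows essentially the same argument as the paper: apply \eqref{eq:firmly} along the orbit with a fixed $\nu\in\Fix\mathscr T$, deduce that $\bigl(W_2(\mathscr T^n(\mu),\nu)\bigr)_{n\in\N}$ is nonincreasing and hence convergent, and conclude that the consecutive-iterate distances vanish. The telescoping-sum remark is a harmless variant of the same estimate.
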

\begin{proof}
	Let $\mathscr T:\mathcal P_2(\R^d)\to \mathcal P_2(\R^d)$ be a quasi $\alpha$-firmly nonexpansive mapping. 
	Then, by definition, $\Fix\mathscr T\neq\emptyset$ 
	and, for every $\nu\in\Fix\mathscr T$, we have
	$$W_2^2(\mathscr T^{n+1}(\mu),\nu)
	\leq 
	W^2_2(\mathscr T^n(\mu),\nu)-\frac{1-\alpha}{\alpha}\,W^2_2(\mathscr T^{n+1}(\mu),\mathscr T^n(\mu))
	\quad \text{for all } \mu\in\mathcal P_2(\R^d).
	$$ 
	In particular, $W_2(\mathscr T^{n+1}(\mu),\nu)\leq W_2(\mathscr T^n(\mu),\nu)$ implies 
	that $(W_2(\mathscr T^{n}(\mu),\nu)))_{n\in\mathbb N}$ is bounded and monotone non-increasing sequence in $\mathbb R$.
	Hence $\lim_{n\to+\infty}W_2(\mathscr T^{n}(\mu),\nu)=\ell(\nu)$ for a certain non-negative number $\ell(\nu)$. 
	Consequently, we get
	$$
	0\leq\lim_{n\to+\infty}W^2_2(\mathscr T^{n+1}(\mu),\mathscr T^n(\mu))
	\leq
	\frac{\alpha}{1-\alpha}\,\lim_{n\to+\infty}\Big( W^2_2(\mathscr T^n(\mu),\nu)- W^2_2(\mathscr T^{n+1}(\mu),\nu)\Big)=0.$$
\end{proof}

Let $(\mu_n)_{n\in\mathbb N}$ be a sequence in $\mathcal P_2(\R^d)$. 
An element $\mu\in\mathcal P_2(\R^d)$ is a \emph{narrow cluster point} of $(\mu_n)_{n\in\mathbb N}$ 
if and only if there exists a subsequence $(\mu_{n_k})_{k\in\mathbb N}$ such that $\mu_{n_k}\overset{\mathcal N}\to\mu$. 
Recently, it has been shown \cite[Theorem 5.1]{Naldi} that if $\mu_n\overset{\mathcal N}\to\mu$, 
then the following inequality holds true
\begin{equation} \label{eq:Opial}
\liminf_{n\to+\infty}W_2(\mu_n,\mu)<\liminf_{n\to+\infty}W_2(\mu_n,\nu),\quad \text{for all } \nu\in\mathcal P_2(\R^d)\setminus\{\mu\}.
\end{equation}
This is known as the \emph{Opial's property}. 
It implies, for all $\nu\in\mathcal P_2(\R^d)\setminus\{\mu\}$, that
\begin{equation}\label{eq:Opial-sup}
\limsup_{n\to+\infty}W_2(\mu_n,\mu)
=
\lim_{k\to+\infty}W_2(\mu_{n_k},\mu)<\liminf_{k\to+\infty}W_2(\mu_{n_k},\nu)\leq\limsup_{n\to+\infty}W_2(\mu_n,\nu).
\end{equation}

A sequence $(\mu_n)_{n\in\mathbb N}\subset\mathcal P_2(\R^d)$ is \emph{Fej\'er monotone with respect to a set} 
$S\subseteq\mathcal P_2(\R^d)$ if $W_2(\mu_{n+1},\nu) \leq W_2(\mu_n,\nu)$ for all $\nu\in S$ and for all $n\in\mathbb N$.

\begin{lemma}	\label{l:cluster-points}
	Let $(\mu_n)_{n\in\mathbb N}\subset\mathcal P_2(\R^d)$ be Fej\'er monotone with respect to a set $S\subseteq\mathcal P_2(\R^d)$. If all narrow cluster points of $(\mu_n)_{n\in\mathbb N}$ belong to $S$, 
	then $\mu_n\overset{\mathcal N}\to\mu$ for some $\mu\in\mathcal P_2(\R^d)$.
\end{lemma}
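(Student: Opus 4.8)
The plan is to mimic the classical Hilbert-space argument that Fejér monotonicity plus the Opial property forces convergence, adapting it to the narrow topology on $\mathcal P_2(\R^d)$. First I would show that $(\mu_n)_{n\in\mathbb N}$ has at least one narrow cluster point. Since $(\mu_n)$ is Fejér monotone with respect to $S$, and $S$ is nonempty (a hypothesis implicitly needed — otherwise the statement is vacuous or one takes it as part of "all cluster points belong to $S$" being meaningful; I would fix a $\nu_0 \in S$), the real sequence $W_2(\mu_n,\nu_0)$ is non-increasing and bounded below by $0$, hence convergent; in particular $(\mu_n)$ lies in a closed ball of $(\mathcal P_2(\R^d),W_2)$. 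By Lemma \ref{l:fundamental} this ball is tight, so by Prokhorov's Theorem \ref{th:Prokhorov} the sequence is relatively compact in the narrow topology, and therefore a narrowly convergent subsequence $\mu_{n_k}\overset{\mathcal N}\to\mu$ exists. By hypothesis $\mu\in S$.

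Next I would establish uniqueness of the narrow cluster point, which is where Opial's property enters and which I expect to be the main obstacle. Suppose toward a contradiction that a second subsequence satisfies $\mu_{m_j}\overset{\mathcal N}\to\tilde\mu$ with $\tilde\mu\neq\mu$; again $\tilde\mu\in S$. Because $\mu,\tilde\mu\in S$, Fejér monotonicity guarantees that both real sequences $W_2(\mu_n,\mu)$ and $W_2(\mu_n,\tilde\mu)$ are non-increasing, hence each converges; call the limits $\ell$ and $\tilde\ell$. Applying Opial's inequality \eqref{eq:Opial-sup} along the subsequence converging to $\mu$ gives
\begin{equation*}
\ell = \limsup_{n}W_2(\mu_n,\mu) = \lim_k W_2(\mu_{n_k},\mu) < \liminf_k W_2(\mu_{n_k},\tilde\mu) \le \limsup_n W_2(\mu_n,\tilde\mu) = \tilde\ell .
\end{equation*}
Symmetrically, applying \eqref{eq:Opial-sup} along the subsequence converging to $\tilde\mu$ yields $\tilde\ell < \ell$, a contradiction. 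Hence $\mu$ is the unique narrow cluster point.

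Finally I would upgrade "unique narrow cluster point" to "narrow convergence of the whole sequence." This uses relative compactness again: since $(\mu_n)$ is relatively compact in the narrow topology (tightness, as above) and the narrow topology restricted to a tight set is metrizable — or, more elementarily, arguing by contradiction, if $\mu_n\not\overset{\mathcal N}\to\mu$ then some $\varphi\in C_b(\R^d)$ and $\varepsilon>0$ admit a subsequence with $|\int\varphi\,d\mu_{n_k}-\int\varphi\,d\mu|\ge\varepsilon$; that subsequence is still tight, so it has a further narrowly convergent sub-subsequence whose limit is a narrow cluster point of $(\mu_n)$ different from $\mu$ (it cannot equal $\mu$ because of the $\varphi$-gap), contradicting uniqueness. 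Therefore $\mu_n\overset{\mathcal N}\to\mu$, completing the proof. The delicate point throughout is that narrow convergence, unlike $W_2$-convergence, does not control second moments, so one must be careful to invoke Opial's property \eqref{eq:Opial} exactly in the form stated (with $W_2$ distances, which remain finite and are narrowly lower semicontinuous) and not inadvertently assume $W_2$-convergence of the subsequences.
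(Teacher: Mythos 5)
Your proposal is correct and follows essentially the same route as the paper: tightness of bounded sets (Lemma \ref{l:fundamental}) plus Prokhorov's Theorem to extract a narrow cluster point, Opial's property \eqref{eq:Opial-sup} combined with Fej\'er monotonicity (using that cluster points lie in $S$) to show the cluster point is unique, and relative narrow compactness to upgrade uniqueness to convergence of the whole sequence. Your uniqueness step, which notes that $W_2(\mu_n,\mu)$ and $W_2(\mu_n,\tilde\mu)$ converge by Fej\'er monotonicity and then applies Opial symmetrically to get $\ell<\tilde\ell<\ell$, is a slightly cleaner execution of the same idea as the paper's $\varepsilon$-argument.
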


\begin{proof}
	Let $(\mu_n)_{n\in\mathbb N}\subset\mathcal P_2(\R^d)$ be Fej\'er monotone with respect to a set $S\subseteq \mathcal P_2(\R^d)$. 
	In particular, it follows that $(\mu_n)_{n\in\mathbb N}$ is bounded. 
	By Lemma \ref{l:fundamental} and Theorem \ref{th:Prokhorov}, 
	there exists a subsequence $(\mu_{n_k})_{k\in\mathbb N}$ of $(\mu_n)_{n\in\mathbb N}$ 
	narrowly converging to some element $\mu\in\mathcal P_2(\R^d)$. 
	Next, we prove that if $\nu\in S$ is another narrow cluster point of $(\mu_n)_{n\in\mathbb N}$, then $\mu=\nu$ 
	and the whole sequence $(\mu_n)_{n\in\mathbb N}$ narrowly converges to $\mu$. 
	Suppose on the contrary that $\mu\neq\nu$. 
	Let $\mu_{m_k}\overset{\mathcal N}\to\nu$. 
	Denote by $r_1=\limsup_{k\to+\infty}W_2(\mu_{n_k},\mu)$ 
	and $r_2=\limsup_{k\to+\infty}W_2(\mu_{m_k},\nu)$. 
	Assume w.l.o.g. that $r_1\leq r_2$. 
	By \eqref{eq:Opial-sup} 
	it follows that $r_2<\limsup_{k\to+\infty}W_2(\mu_{m_k},\mu)$. 
	For every $\varepsilon>0$, there is $k_0\in\mathbb N$ 
	such that $W_2(\mu_{n_k},\mu)<r_1+\varepsilon$ 
	whenever $k\geq k_0$. Moreover by Fej\'er monotonicity 
	$W_2(\mu_{m_k},\mu)<r_1+\varepsilon$ whenever $m_k\geq n_{k_0}$. 
	Consequently, there is $k_1\in\mathbb N$, 
	such that $W_2(\mu_{m_k},\mu)<r_2+\varepsilon$ whenever $k\geq k_1$. 
	Therefore $\limsup_{k\to+\infty}W_2(\mu_{m_k},\mu)\leq r_2$. 
	However, this contradicts Opial's property. 
	Hence the narrow cluster point is unique. 
	
	Now suppose that $\mu_n$ does not narrowly converge to $\mu\in S$. 
	Then there is a narrow open set $U$ containing $\mu$ such that $\mu_n\notin U$ 
	for infinitely many $n$. 
	Since $\{\mu_n\,:\,\mu_n\in \mathcal P(\R^d)\setminus U\}$ 
	is bounded, it possesses a narrowly convergent subsequence. 
	Let $\nu$ be the corresponding narrow cluster point. 
	Since $\mathcal P_2(\R^d)\setminus U$ is narrowly closed, 
	hence narrowly sequentially closed, we conclude that $\nu\in\mathcal P_2(\R^d)\setminus U$. 
	But by construction we have $\nu\neq\mu$, which 
	contradicts the uniqueness of the narrow cluster point.
\end{proof}

\subsection{Opial's Theorem}
A well-known result of Opial \cite{Opial} for uniformly convex Banach spaces $X$ satisfying Opial's property 
states that the iterations $x_{n+1}=Tx_n$ of a nonexpansive and asymptotic regular operator 
$T:X\to X$ with $\Fix T\neq\emptyset$ always converge weakly to an element in $\Fix T$. 
Recently, in \cite[Theorem 6.9]{Naldi}, it was shown that such a result holds true as well in the Wasserstein space 
$\mathcal P_2(\mathbb R^d)$ for mappings that are nonexpansive, asymptotic regular and have a nonempty fixed point set. 
This is because the space $\mathcal P_2(\mathbb R^d)$ satisfies Opial's property with respect to the narrow convergence. 


\begin{thm}\cite[Theorem 6.9]{Naldi} 	\label{th:Naldi}
	Let $\mathscr T:\mathcal  P_2(\mathbb R^d) \to\mathcal  P_2(\mathbb R^d)$ be a nonexpansive mapping that is asymptotic regular on 
	$\mathcal  P_2(\mathbb R^d)$. Then $\Fix \mathscr T\neq\emptyset$ if and only if for some  $\mu_0\in\mathcal  P_2(\mathbb R^d)$ (hence any $\mu_0\in\mathcal  P_2(\mathbb R^d)$) the iterates $\mu_{n+1}=\mathscr T(\mu_n)$ are bounded in $\mathcal  P_2(\mathbb R^d)$, in which case they narrowly converge to some $\mu\in\Fix\mathscr T$.
\end{thm}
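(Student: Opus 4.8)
The plan is to mirror the classical Opial argument, replacing weak convergence by narrow convergence and using the ingredients assembled in Section~\ref{sec:fix}: Opial's property \eqref{eq:Opial}, tightness of closed balls (Lemma~\ref{l:fundamental}) together with Prokhorov (Theorem~\ref{th:Prokhorov}), lower semicontinuity of $W_2(\cdot,\nu)$ in the narrow topology, and the asymptotic regularity hypothesis. I would first dispatch the easy direction: if $\mathscr T$ has a fixed point $\nu$, then nonexpansiveness gives $W_2(\mu_{n+1},\nu)=W_2(\mathscr T(\mu_n),\mathscr T(\nu))\le W_2(\mu_n,\nu)$, so the orbit is Fej\'er monotone with respect to $\Fix\mathscr T$ and in particular bounded, for every starting point $\mu_0$. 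It then remains to prove the substantive direction: assuming the iterates $(\mu_n)$ stay in a closed ball, produce a fixed point and show $\mu_n\overset{\mathcal N}\to\mu\in\Fix\mathscr T$.

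So suppose $(\mu_n)_{n\in\mathbb N}$ is bounded. By Lemma~\ref{l:fundamental} and Theorem~\ref{th:Prokhorov} the set $\{\mu_n\}$ is tight, hence relatively narrowly compact, so there is a subsequence $\mu_{n_k}\overset{\mathcal N}\to\mu$ for some $\mu\in\mathcal P_2(\R^d)$ (membership in $\mathcal P_2$ is fine since closed balls in $(\mathcal P_2,W_2)$ are narrowly closed). The key step is to show $\mu\in\Fix\mathscr T$. I would argue by contradiction: if $\mathscr T(\mu)\neq\mu$, then by Opial's property \eqref{eq:Opial} applied to the subsequence $(\mu_{n_k})$,
\begin{equation}
\liminf_{k\to\infty}W_2(\mu_{n_k},\mu)<\liminf_{k\to\infty}W_2(\mu_{n_k},\mathscr T(\mu)).
\end{equation}
On the other hand, using the triangle inequality, asymptotic regularity (which gives $W_2(\mathscr T(\mu_{n_k}),\mu_{n_k})\to0$), and nonexpansiveness,
\begin{equation}
W_2(\mu_{n_k},\mathscr T(\mu))\le W_2(\mu_{n_k},\mathscr T(\mu_{n_k}))+W_2(\mathscr T(\mu_{n_k}),\mathscr T(\mu))\le o(1)+W_2(\mu_{n_k},\mu),
\end{equation}
so $\liminf_k W_2(\mu_{n_k},\mathscr T(\mu))\le\liminf_k W_2(\mu_{n_k},\mu)$, contradicting the strict inequality above. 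Hence $\mu=\mathscr T(\mu)$, so $\Fix\mathscr T\neq\emptyset$.

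Finally, to upgrade subsequential narrow convergence to convergence of the whole sequence, I would invoke Lemma~\ref{l:cluster-points}: having just shown $\Fix\mathscr T\neq\emptyset$, the full orbit $(\mu_n)$ is Fej\'er monotone with respect to $S\coloneqq\Fix\mathscr T$ (by the easy direction applied to any fixed point), and every narrow cluster point of $(\mu_n)$ lies in $\Fix\mathscr T$ by exactly the contradiction argument of the previous paragraph (it used only narrow convergence of a subsequence, not the particular subsequence). Lemma~\ref{l:cluster-points} then yields $\mu_n\overset{\mathcal N}\to\mu$ for a single $\mu$, which is a narrow cluster point and hence lies in $\Fix\mathscr T$. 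The main obstacle is the contradiction step identifying cluster points as fixed points; everything there hinges on combining asymptotic regularity with the triangle inequality to beat the strict Opial inequality, and one must be careful that Opial's property as stated in \eqref{eq:Opial} is applied along the narrowly convergent subsequence rather than the full sequence. The rest is bookkeeping with tightness and Fej\'er monotonicity.
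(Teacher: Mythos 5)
Your argument is correct, and there is nothing in the paper to compare it against: Theorem~\ref{th:Naldi} is imported verbatim from \cite[Theorem 6.9]{Naldi} and the paper gives no proof of it. Your reconstruction is the standard Opial-type argument and uses exactly the ingredients the paper assembles in Section~\ref{sec:fix} — the easy direction via Fej\'er monotonicity from nonexpansiveness, identification of narrow cluster points as fixed points by playing asymptotic regularity and nonexpansiveness against the strict inequality \eqref{eq:Opial} along the narrowly convergent subsequence, and the upgrade to convergence of the whole orbit via Lemma~\ref{l:cluster-points} (with the limit lying in $\Fix\mathscr T$ because it is itself a cluster point) — so it is a sound stand-alone proof of the cited result.
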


As a consequence of this theorem we have the following corollary.

\begin{corollary}	\label{p:fixed-point-quasi}
	Let $\mathscr T:\mathcal  P_2(\mathbb R^d)\to\mathcal P_2(\mathbb R^d)$ be a nonexpansive,  quasi $\alpha$-firmly nonexpansive mapping. 
	Then, for any $\mu_0\in\mathcal  P_2(\mathbb R^d)$, 
	the iterates $\mu_{n+1}=\mathscr T(\mu_n)$ converge narrowly to some element $\mu\in\Fix\mathscr T$.
\end{corollary}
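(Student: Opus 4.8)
The plan is to combine Lemma~\ref{l:asympotic-reg} with Theorem~\ref{th:Naldi}. Since $\mathscr T$ is assumed to be nonexpansive, the only hypotheses of Theorem~\ref{th:Naldi} that remain to be verified are asymptotic regularity on $\mathcal P_2(\R^d)$ and boundedness of the iterates $(\mu_n)_{n\in\N}$ starting from an arbitrary $\mu_0$. Asymptotic regularity is exactly the content of Lemma~\ref{l:asympotic-reg}, which applies because $\mathscr T$ is quasi $\alpha$-firmly nonexpansive. So the remaining point is boundedness.

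For boundedness, I would fix any $\nu\in\Fix\mathscr T$ (nonempty by the definition of quasi $\alpha$-firmly nonexpansive). Applying \eqref{eq:firmly} along the orbit gives
\begin{equation*}
W_2^2(\mu_{n+1},\nu)\leq W_2^2(\mu_n,\nu)-\frac{1-\alpha}{\alpha}\,W_2^2(\mu_n,\mu_{n+1})\leq W_2^2(\mu_n,\nu),
\end{equation*}
so $(W_2(\mu_n,\nu))_{n\in\N}$ is non-increasing, hence bounded by $W_2(\mu_0,\nu)$. In particular the sequence $(\mu_n)_{n\in\N}$ lies in the closed ball of radius $W_2(\mu_0,\nu)$ about $\nu$, so it is bounded in $(\mathcal P_2(\R^d),W_2)$. (This is just the Fej\'er monotonicity of the orbit with respect to $\Fix\mathscr T$.)

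With both conditions in hand, Theorem~\ref{th:Naldi} applies and yields $\mu_n\overset{\mathcal N}\to\mu$ for some $\mu\in\Fix\mathscr T$, which is the claim. There is essentially no obstacle here: the corollary is a direct packaging of the earlier lemma and the cited theorem, the only substantive observation being that the quasi $\alpha$-firmly nonexpansive inequality forces the orbit to stay in a fixed closed ball. One could alternatively bypass Theorem~\ref{th:Naldi} and argue directly via Lemma~\ref{l:cluster-points}: the orbit is Fej\'er monotone with respect to $\Fix\mathscr T$, and any narrow cluster point $\bar\mu$ lies in $\Fix\mathscr T$ because asymptotic regularity plus nonexpansiveness give $W_2(\mathscr T(\bar\mu),\bar\mu)\le\liminf_k W_2(\mathscr T(\mu_{n_k}),\mu_{n_k})=0$ using narrow lower semicontinuity of $W_2(\cdot,\cdot)$; then Lemma~\ref{l:cluster-points} gives narrow convergence to a single limit, which then must be a fixed point. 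But invoking Theorem~\ref{th:Naldi} directly is the shortest route.
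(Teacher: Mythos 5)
Your proof is correct and follows essentially the same route as the paper: asymptotic regularity from Lemma \ref{l:asympotic-reg}, nonexpansiveness by hypothesis, and then Theorem \ref{th:Naldi}. The only cosmetic difference is that you verify boundedness of the orbit explicitly via Fej\'er monotonicity, whereas the paper gets it for free from the ``if and only if'' in Theorem \ref{th:Naldi} once $\Fix\mathscr T\neq\emptyset$ is noted; both are fine.
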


\begin{proof}
	By Lemma \ref{l:asympotic-reg} the operator $\mathscr T$ is asymptotic regular, 
	whenever it is quasi $\alpha$-firmly nonexpansive. 
	Moreover, by Definition \eqref{eq:firmly}, the fixed point set $\Fix \mathscr T$ is nonempty. 
	Hence the result follows directly from Theorem \ref{th:Naldi}. 
\end{proof}

A function $\phi:\mathcal P_2(\mathbb R^d)\to(-\infty,+\infty]$ is a \emph{characteristic function} 
of a mapping $\mathscr T:\mathcal P_2(\mathbb R^d)\to\mathcal P_2(\mathbb R^d)$, 
if 
$$\argmin\{\phi(\nu): \nu\in\mathcal P_2(\R^d)\}=\Fix \mathscr T$$ 
whenever the latter is nonempty. 
Let $\mathscr C_{\mathscr T}$ denote the set of all characteristic functions associated to the mapping $\mathscr T$. 
Note that $\mathscr C_{\mathscr T}\neq\emptyset$ since $\phi(\mu)=W_2(\mu,\mathscr T(\mu))$ 
is a characteristic function for any mapping $\mathscr T$ satisfying $\Fix\mathscr T\neq\emptyset$. 
A mapping $\mathscr T:\mathcal P_2(\R^d)\to\mathcal P_2(\mathbb R^d)$ is said to satisfy the \emph{quadratic growth condition}, 
if there exist a constant $C>0$ and a proper, narrow lower semicontinuous function $\phi\in\mathscr C_{\mathscr T}$ satisfying 
\begin{equation}\label{eq:quadratic}
W^2_2(\mathscr T(\mu),\nu)-W^2_2(\mu,\nu)\leq C\,(\phi(\nu)-\phi(\mathscr T(\mu))) \quad \text{for all } \mu,\nu\in\mathcal P_2(\mathbb R^d). 
\end{equation}

\begin{thm} \label{th:quadratic}
	Let $\mathscr T:\mathcal P_2(\mathbb R^d)\to\mathcal P_2(\mathbb R^d)$ be a quasi $\alpha$-firmly nonexpansive mapping 
	satisfying the quadratic growth condition \eqref{eq:quadratic}. Then, for any $\mu_0\in\mathcal P_2(\mathbb R^d)$, 
	the iterates $\mu_{n+1}=\mathscr T(\mu_n)$ converge narrowly to some element $\mu\in\Fix \mathscr T$.
\end{thm}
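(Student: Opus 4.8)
The plan is to combine Fej\'er monotonicity, the quadratic growth condition, and Opial's property through Lemma~\ref{l:cluster-points}. Since $\mathscr T$ is quasi $\alpha$-firmly nonexpansive, dropping the negative term in \eqref{eq:firmly} shows it is quasi nonexpansive, so that $W_2(\mu_{n+1},\nu)\le W_2(\mu_n,\nu)$ for every $\nu\in\Fix\mathscr T$ and every $n$; in particular $(\mu_n)_{n\in\mathbb N}$ is Fej\'er monotone with respect to $\Fix\mathscr T$, which is nonempty by definition of quasi $\alpha$-firm nonexpansiveness. Fixing $\hat\mu\in\Fix\mathscr T$ gives $W_2(\mu_n,\hat\mu)\le W_2(\mu_0,\hat\mu)$ for all $n$, so the sequence lies in a closed ball of $(\mathcal P_2(\R^d),W_2)$; by Lemma~\ref{l:fundamental} and Theorem~\ref{th:Prokhorov} it is relatively compact in the narrow topology and hence has at least one narrow cluster point.

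The crux is to show that \emph{every} narrow cluster point $\bar\mu$ of $(\mu_n)_{n\in\mathbb N}$ lies in $\Fix\mathscr T$. Let $\phi\in\mathscr C_{\mathscr T}$ be the proper, narrow lsc characteristic function provided by the quadratic growth condition \eqref{eq:quadratic}, and set $m\coloneqq\min\{\phi(\nu):\nu\in\mathcal P_2(\R^d)\}$, which is a real number attained exactly on $\argmin\phi=\Fix\mathscr T$, so $\phi(\hat\mu)=m$. Applying \eqref{eq:quadratic} with $\mu=\mu_n$ (hence $\mathscr T(\mu_n)=\mu_{n+1}$) and $\nu=\hat\mu$, and using $m=\phi(\hat\mu)\le\phi(\mu_{n+1})$, yields
\begin{equation*}
0\le \phi(\mu_{n+1})-m \le \frac1C\bigl(W_2^2(\mu_n,\hat\mu)-W_2^2(\mu_{n+1},\hat\mu)\bigr),
\end{equation*}
where the right-hand side is finite and nonnegative by Fej\'er monotonicity. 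Summing over $n$, the right-hand side telescopes and is bounded by $\tfrac1C W_2^2(\mu_0,\hat\mu)<\infty$, so $\phi(\mu_{n+1})-m\to 0$, i.e.\ $\phi(\mu_n)\to m$. If $\mu_{n_k}\overset{\mathcal N}\to\bar\mu$, narrow lower semicontinuity of $\phi$ gives $\phi(\bar\mu)\le\liminf_{k}\phi(\mu_{n_k})=m$, whence $\phi(\bar\mu)=m$ and $\bar\mu\in\argmin\phi=\Fix\mathscr T$.

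Finally, $(\mu_n)_{n\in\mathbb N}$ is Fej\'er monotone with respect to $S\coloneqq\Fix\mathscr T$ and all its narrow cluster points lie in $S$, so Lemma~\ref{l:cluster-points} gives $\mu_n\overset{\mathcal N}\to\mu$ for some $\mu\in\mathcal P_2(\R^d)$; since $\mu$ is then a narrow cluster point of the sequence, the previous paragraph forces $\mu\in\Fix\mathscr T$, completing the argument. I expect the only genuinely delicate point to be the identification of narrow cluster points with fixed points: narrow convergence is too weak to push $\mathscr T$ through a limit directly (we do not assume $\mathscr T$ narrowly continuous), and it is precisely the quadratic growth condition — converting the Fej\'er-monotone telescoping into $\phi(\mu_n)\to\min\phi$ — combined with narrow lower semicontinuity of $\phi$ that salvages this. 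Note that, along this route, asymptotic regularity (Lemma~\ref{l:asympotic-reg}) is not actually needed, in contrast to the proof of Corollary~\ref{p:fixed-point-quasi}.
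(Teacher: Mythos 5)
Your proof is correct, and it follows the same overall scaffolding as the paper's: Fej\'er monotonicity with respect to $\Fix\mathscr T$ (nonempty by definition), boundedness, Prokhorov/Lemma~\ref{l:fundamental} to extract narrow cluster points, narrow lower semicontinuity of the characteristic function $\phi$ to place every cluster point in $\argmin\phi=\Fix\mathscr T$, and Lemma~\ref{l:cluster-points} to upgrade to convergence of the whole sequence. Where you genuinely differ is in the key intermediate step. The paper applies \eqref{eq:quadratic} along the iterates for an \emph{arbitrary} $\nu\in\mathcal P_2(\R^d)$, telescopes, divides by $N+1$ (a Ces\`aro-type argument), and uses the monotonicity $\phi(\mu_{n+1})\le\phi(\mu_n)$ (obtained by setting $\nu=\mu$ in \eqref{eq:quadratic}) to arrive at $\phi(\mu_{N+1})+\tfrac{1}{C(N+1)}\bigl(W_2^2(\mu_{N+1},\nu)-W_2^2(\mu_0,\nu)\bigr)\le\phi(\nu)$, from which the cluster point is a global minimizer of $\phi$ without ever referring to the minimal value. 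You instead test \eqref{eq:quadratic} only against a single $\hat\mu\in\Fix\mathscr T$, use that $m=\min\phi$ is attained there (which is exactly what the characteristic-function property plus properness guarantees), and telescope the nonnegative quantities $\phi(\mu_{n+1})-m$ against the decreasing sequence $W_2^2(\mu_n,\hat\mu)$ to get $\phi(\mu_n)\to m$ directly; this is slightly more elementary (no averaging, no need for monotonicity of $(\phi(\mu_n))_n$), at the mild cost of invoking attainment of the minimum, which here is free. Your closing observation is also accurate: neither your argument nor the paper's uses asymptotic regularity for this theorem; it enters only via the route through Theorem~\ref{th:Naldi} in Corollary~\ref{p:fixed-point-quasi}.
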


\begin{proof}
	Since $\mathscr T$ is quasi $\alpha$-firmly nonexpansive, then, by similar arguments as in Proposition \ref{p:fixed-point-quasi}, 
	it follows that, for any $\mu_0\in\mathcal P_2(\R^d)$, 
	the sequence $(\mu_n)_{n\in\mathbb N}$ defined by $\mu_{n+1}=\mathscr T(\mu_n)$ is bounded and therefore it contains a subsequence $(\mu_{n_k})_{k\in\mathbb N}$ narrowly converging to a certain element $\mu\in\mathcal P_2(\mathbb R^d)$. 
	The  assumption that $\mathscr T$ satisfies the quadratic growth condition implies 
	that there is a constant $C>0$ and a proper narrow lsc function $\phi\in\mathscr C_{\mathscr T}$ 
	such that inequality \eqref{eq:quadratic} is satisfied. 
	In particular, it follows that $\phi(\mathscr T(\mu))\leq\phi(\mu)$ for all 
	$\mu\in\mathcal P_2(\mathbb R^d)$. 
	Consequently, we obtain $\phi(\mu_{n+1})\leq\phi(\mu_n)$ for every $n\in\mathbb N$. 
	Again, from condition \eqref{eq:quadratic} for every $\nu\in\mathcal P_2(\mathbb R^d)$ it follows 
	\begin{align}
	\sum_{n=0}^N(W^2_2(\mathscr T(\mu_n),\nu)-W^2_2(\mu_n,\nu))
	&\leq C\,\sum_{n=0}^N(\phi(\nu)-\phi(\mathscr T(\mu_n))),\\
	W^2_2(\mathscr T(\mu_N),\nu)-W^2_2(\mu_0,\nu)
	&\leq C\,(N+1)\,\phi(\nu)-C\sum_{n=0}^N\phi(\mathscr T(\mu_n)).
	\end{align}
	Rearranging the terms and using the monotonicity of $(\phi(\mu_n))_{n\in\mathbb N}$ yields
	$$\phi(\mu_{N+1})+\frac{1}{C\,(N+1)}(W^2_2(\mu_{N+1},\nu)-W^2_2(\mu_0,\nu))\leq \phi(\nu) \quad \text{for all } \nu\in\mathcal P_2(\mathbb R^d).$$
	Consequently, by narrow lower semicontinuity of $\phi$, we get 
	$$\phi(\nu)\geq\limsup_{N\to+\infty}\phi(\mu_{N+1})\geq\liminf_{k\to+\infty}\phi(\mu_{n_k})\geq\phi(\mu)
	\quad \text{for all } \nu\in\mathcal P_2(\R^d).$$
	Therefore, $\mu\in\argmin\{\phi(\nu):\nu\in\mathcal P_2(\R^d)\}$. 
	
	Since $\phi$ is a characteristic function for the mapping $\mathscr T$ we have $\mu\in\Fix\mathscr T$. 
	If $\widetilde{\mu}\in\mathcal P_2(\R^d)$ is another narrow cluster point of $(\mu_n)_{n\in\mathbb N}$, 
	then by the same arguments we conclude that $\tilde{\mu}\in\Fix \mathscr T$. 
	By quasi $\alpha$-firmly nonexpansiveness of the mapping $\mathscr T$, 
	it holds that $(\mu_n)_{n\in\mathbb N}$ is Fej\'er monotone with respect to $\Fix\mathscr T$. 
	By Lemma \ref{l:cluster-points} it follows that the whole sequence $(\mu_n)_{n\in\mathbb N}$ narrowly converges to $\mu\in\Fix\mathscr T$.
\end{proof}

As a corollary, we obtain a result on the convergence of the so-called \emph{proximal point algorithm}
which is already known from the literature, see, e.g., \cite[Theorem 6.7]{Naldi} .

\begin{corollary}	\label{c:PPA}
	Let $\mathscr F:\mathcal P_2(\mathbb R^d)\to(-\infty+\infty]$ 
	be proper, lsc, coercive and convex along generalized geodesics. 
	For $\tau>0$, let $\mathscr J_{\tau}:\mathcal P_2(\R^d)\to\mathcal P_2(\mathbb R^d)$ 
	be the proximal mapping defined in \eqref{eq:proximal-mapping}. 
	Then, for any $\mu_0\in \overline{D(\mathscr F)}$, 
	the iterates $\mu_{n+1}=\mathscr J_{\tau}(\mu_n)$ converge narrowly to some 
	$\mu\in\argmin\{\mathscr F(\nu):\nu\in\mathcal P_2(\mathbb R^d)\}$. 
\end{corollary}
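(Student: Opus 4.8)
The plan is to recognize the proximal point iteration $\mu_{n+1}=\mathscr J_\tau(\mu_n)$ as a particular instance of the fixed point iteration analyzed in Theorem~\ref{th:quadratic}, so that the corollary reduces to checking that $\mathscr J_\tau$ fulfils the hypotheses of that theorem for a suitable characteristic function.

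First I would quote Proposition~\ref{p:proximal-quasi}: the proximal mapping $\mathscr J_\tau$ is quasi $\alpha$-firmly nonexpansive on $\overline{D(\mathscr F)}$ with $\alpha=\tfrac12$. In particular $\Fix\mathscr J_\tau\neq\emptyset$, and by the proposition of Section~\ref{sec:prox} one has $\Fix\mathscr J_\tau=\argmin_{\mu\in\mathcal P_2(\R^d)}\mathscr F(\mu)$. Hence it only remains to produce a proper, narrowly lower semicontinuous function $\phi\in\mathscr C_{\mathscr J_\tau}$ and a constant $C>0$ for which the quadratic growth condition \eqref{eq:quadratic} holds.

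The decisive point is that inequality \eqref{eq:1} already supplies this with the choice $\phi\coloneqq\mathscr F$ and $C\coloneqq 2\tau$. Indeed, $\mathscr F$ is proper by hypothesis, it is narrowly lower semicontinuous (this is what makes $\mathscr J_\tau$ well defined via \cite[Theorem~4.1.2]{Ambrosio}), and the identity $\argmin\mathscr F=\Fix\mathscr J_\tau$ just recalled shows $\mathscr F\in\mathscr C_{\mathscr J_\tau}$. Multiplying \eqref{eq:1} by $2\tau$ and discarding the nonpositive term $-W_2^2(\mathscr J_\tau(\mu),\mu)$ on its right-hand side yields, for all $\mu\in\overline{D(\mathscr F)}$ and all $\nu\in D(\mathscr F)$,
\[W_2^2(\mathscr J_\tau(\mu),\nu)-W_2^2(\mu,\nu)\leq 2\tau\bigl(\mathscr F(\nu)-\mathscr F(\mathscr J_\tau(\mu))\bigr),\]
and for $\nu\notin D(\mathscr F)$ the right-hand side is $+\infty$, so this is precisely \eqref{eq:quadratic} with $C=2\tau$. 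Applying Theorem~\ref{th:quadratic} then gives that, for any $\mu_0\in\overline{D(\mathscr F)}$, the iterates $\mu_{n+1}=\mathscr J_\tau(\mu_n)$ converge narrowly to some $\mu\in\Fix\mathscr J_\tau=\argmin\mathscr F$, which is the claim.

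The only delicate point — and the step I would treat most carefully — is a matter of domains. The quadratic growth inequality \eqref{eq:quadratic} and Theorem~\ref{th:quadratic} are phrased for self-maps of all of $\mathcal P_2(\R^d)$ and quantify over all $\mu,\nu$ there, whereas $\mathscr J_\tau$ is only defined on the Wasserstein-closed convex set $\overline{D(\mathscr F)}$ and \eqref{eq:1} is only available for $\mu\in\overline{D(\mathscr F)}$. I would therefore carry out the argument with $\mathcal P_2(\R^d)$ replaced throughout by $\overline{D(\mathscr F)}$: the orbit started at $\mu_0\in\overline{D(\mathscr F)}$ stays in $\overline{D(\mathscr F)}$ (indeed in $D(\mathscr F)$ after one step), Fej\'er monotonicity keeps it in a fixed closed ball, and every ingredient of the proof of Theorem~\ref{th:quadratic} — tightness via Lemma~\ref{l:fundamental} and Prokhorov's theorem, Opial's property, monotonicity of $\phi$ along the orbit, Fej\'er monotonicity with respect to $\Fix\mathscr J_\tau$ — uses only this boundedness and goes through verbatim. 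The narrow lower semicontinuity of $\mathscr F$ is exactly what is invoked when passing to the $\liminf$ along the narrowly convergent subsequence, so the choice $\phi=\mathscr F$ is legitimate. No positive-curvature subtleties intervene, since everything needed is already packaged inside Theorem~\ref{th:quadratic}.
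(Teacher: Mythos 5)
Your proposal is correct and is essentially the paper's own proof: Proposition~\ref{p:proximal-quasi} gives quasi $\tfrac12$-firm nonexpansiveness, inequality \eqref{eq:1} yields the quadratic growth condition with $\phi=\mathscr F$ and $C=2\tau$, and Theorem~\ref{th:quadratic} gives the narrow convergence to a point of $\Fix\mathscr J_\tau=\argmin\mathscr F$. Your extra care about working on $\overline{D(\mathscr F)}$ and about the narrow lower semicontinuity of $\mathscr F$ only spells out details the paper leaves implicit.
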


\begin{proof}
	By Proposition \ref{p:proximal-quasi}, the proximal mapping $\mathscr J_{\tau}$ is quasi $\alpha$-firmly nonexpansive with $\alpha=1/2$. 
	Moreover, from inequality \eqref{eq:1} we know that $\mathscr J_{\tau}$ satisfies the quadratic growth condition with $C=2\tau$ and $\phi=\mathscr F$. Then the result follows from Theorem \ref{th:quadratic}.
\end{proof}

\section{Cyclic proximal point algorithm} \label{sec:cppa}
Let $\mathscr F_i:\mathcal P_2(\R^d)\to(-\infty,+\infty]$ be proper, 
lsc, coercive and convex along generalized geodesics 
for $i=1,2,\cdots,N$. 
Consider the minimization problem
\begin{equation}\label{eq:minimization}
\inf_{\mu\in\mathcal P_2(\R^d)}\sum_{i=1}^N\mathscr F_i(\mu).
\end{equation}
The function $\mathscr F=\sum_{i=1}^N\mathscr F_i$ is itself proper, lsc, coercive and convex along generalized geodesics, 
since it is the sum of finitely many such functions. Suppose further that $D(\mathscr F)\subseteq D(\mathscr F_i)$.
A popular method to solve a problem of this kind is the proximal point algorithm from the previous section.
However, computing the proximal mapping $\mathscr J_{\tau}$ might be complicated because 
it could happen that the function $\mathscr F$ is difficult to handle, both theoretically and computationally. 
One way out consists in  considering the functions $\mathscr F_i$ separately, 
that is one computes the proximal mapping $\mathscr J_{\tau_i}$ for each function, 
where $\tau_i>0$ is the corresponding step size for $i=1,2,\cdots,N$. Then we consider the iterates
\begin{equation}
\label{eq:alg}
\mu_{n+1}=\mathscr J_{\tau_{[n]}}(\mu_n), \quad\text{where}\;[n]=n\,(\text{mod}\, N)+1\in\{1,2,\cdots, N\}.
\end{equation}
Such a method is known as the \emph{cyclic proximal point method}. 
For two operators, it is also called backward-backward splitting method.
Splitting methods in convex analysis date back to papers  of Lions, Mercier \cite[Lions--Mercier, 1979]{Lions} 
who studied splitting algorithms for stationary and evolution problems involving the sum of two monotone (multivalued) operators defined on a Hilbert space. 
In finite dimensional, linear spaces cyclic proximal point algorithms go back to \cite{ber2013}.
Since then splitting algorithms have been applied to more general problems in the setting of both linear and non-linear spaces. 
For example, in the context of complete $\CAT(0)$ spaces, this proximal point algorithms were studied in \cite{Bac13}, see also \cite{FO02}
and their cyclic version in \cite{Bac14}. 
In this paper, we introduce the cyclic proximal point algorithm in $\mathcal P_2(\R^d)$. For recent papers on related algorithms, see e.g. \cite{FTC2021,SKL2021}.

\begin{thm}	\label{th:SPPA}
	Let $\mathscr F_i:\mathcal P_2(\R^d)\to(-\infty,+\infty]$ be proper, lsc, coercive and convex along generalized geodesics. Denote by $\mathscr F=\sum_{i=1}^N\mathscr F_i$ and suppose that $\emptyset\neq D(\mathscr F)\subseteq D(\mathscr F_i)$ for $i=1,2,\cdots,N$.
	Let $\mathscr J_{\tau_i}:\mathcal P_2(\R^d)\to\mathcal P_2(\R^d)$ 
	be the proximal mapping of $\mathscr F_i$ for $i=1,2,\cdots,N$. 
	Assume that $\bigcap_{i=1}^N\Fix\mathscr J_{\tau_i}\neq\emptyset$. 
	Then, for any $\mu_0\in\overline{D(\mathscr F)}$, the iterates 
	$\mu_{n+1}=\mathscr J_{[n]}(\mu_n)$ 
	converge narrowly to a solution of \eqref{eq:minimization}. 
\end{thm}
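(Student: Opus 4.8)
The plan is to run the Fej\'er-monotonicity / Opial machinery of Section~\ref{sec:fix} directly on the cyclic sequence $(\mu_n)$, rather than on the one-cycle operator $\mathscr T\coloneqq\mathscr J_{\tau_N}\circ\cdots\circ\mathscr J_{\tau_1}$: although $\mathscr T$ is quasi $\gamma$-firmly nonexpansive by Proposition~\ref{p:compositions} (applied $N-1$ times), chaining \eqref{eq:1} along one cycle produces values of the $\mathscr F_i$ at the \emph{intermediate} iterates rather than at $\mathscr T(\mu)$, so $\mathscr T$ does not obviously satisfy the quadratic growth condition \eqref{eq:quadratic} and Theorem~\ref{th:quadratic} cannot be invoked verbatim. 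First I would reformulate the goal. By the Proposition of Section~\ref{sec:prox} applied to each $\mathscr F_i$ we have $\Fix\mathscr J_{\tau_i}=\argmin\mathscr F_i$, so the hypothesis reads $\mathcal C\coloneqq\bigcap_{i=1}^N\argmin\mathscr F_i\neq\emptyset$. Picking $\hat\mu\in\mathcal C$, each $\mathscr F_i(\hat\mu)$ is a finite minimum, hence $\hat\mu\in D(\mathscr F)$ and $\mathscr F(\hat\mu)=\sum_i\min\mathscr F_i\le\mathscr F(\mu)$ for all $\mu$; conversely, any minimizer $\mu^{\ast}$ of $\mathscr F$ satisfies $\sum_i\mathscr F_i(\mu^{\ast})=\sum_i\min\mathscr F_i$, which forces $\mathscr F_i(\mu^{\ast})=\min\mathscr F_i$ for each $i$. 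Hence $\argmin\mathscr F=\mathcal C$, and it suffices to show $(\mu_n)$ converges narrowly to a point of $\mathcal C$.

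Next I would record Fej\'er monotonicity and asymptotic regularity. By Proposition~\ref{p:proximal-quasi} each $\mathscr J_{\tau_i}$ is quasi $\tfrac12$-firmly nonexpansive, so, since $\hat\mu\in\mathcal C$ is a common fixed point, $W_2^2(\mu_{n+1},\hat\mu)\le W_2^2(\mu_n,\hat\mu)-W_2^2(\mu_{n+1},\mu_n)$ for every $n$. Thus $(\mu_n)$ is Fej\'er monotone with respect to $\mathcal C$; in particular it is bounded, $(W_2^2(\mu_n,\hat\mu))_n$ converges, and telescoping the displayed inequality gives $\sum_n W_2^2(\mu_{n+1},\mu_n)<\infty$, so $W_2(\mu_{n+1},\mu_n)\to0$ and hence $\sup_{0\le j\le N}W_2(\mu_{n+j},\mu_n)\to0$ as $n\to\infty$. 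By Lemma~\ref{l:fundamental} and Theorem~\ref{th:Prokhorov}, the bounded sequence $(\mu_n)$ possesses narrow cluster points.

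The heart of the proof is to show that every narrow cluster point of $(\mu_n)$ lies in $\mathcal C$. Let $\mu_{n_k}\overset{\mathcal N}\to\mu^{\ast}$ and fix $i\in\{1,\dots,N\}$. In each block $\{n_k,\dots,n_k+N-1\}$ there is a unique index $m_k$ with $[m_k]=i$, so that $\mu_{m_k+1}=\mathscr J_{\tau_i}(\mu_{m_k})$; applying \eqref{eq:1} for $\mathscr F_i$ with base point $\mu_{m_k}$ and test measure $\hat\mu\in D(\mathscr F_i)$ yields
\[
\mathscr F_i(\mu_{m_k+1})\le\mathscr F_i(\hat\mu)+\tfrac1{2\tau_i}\bigl(W_2^2(\mu_{m_k},\hat\mu)-W_2^2(\mu_{m_k+1},\hat\mu)\bigr),
\]
and the bracketed difference tends to $0$ since $(W_2^2(\mu_n,\hat\mu))_n$ converges and $m_k\to\infty$. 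Therefore $\limsup_k\mathscr F_i(\mu_{m_k+1})\le\mathscr F_i(\hat\mu)=\min\mathscr F_i$. Moreover $W_2(\mu_{m_k+1},\mu_{n_k})\le\sup_{0\le j\le N}W_2(\mu_{n_k+j},\mu_{n_k})\to0$, and since the bounded--Lipschitz distance metrizes narrow convergence and is bounded by $W_1\le W_2$, this gives $\mu_{m_k+1}\overset{\mathcal N}\to\mu^{\ast}$; narrow lower semicontinuity of $\mathscr F_i$ then yields $\mathscr F_i(\mu^{\ast})\le\liminf_k\mathscr F_i(\mu_{m_k+1})\le\min\mathscr F_i$, i.e.\ $\mu^{\ast}\in\argmin\mathscr F_i$. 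As $i$ was arbitrary, $\mu^{\ast}\in\mathcal C$.

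Finally, $(\mu_n)$ is Fej\'er monotone with respect to $\mathcal C$ and all its narrow cluster points lie in $\mathcal C$, so Lemma~\ref{l:cluster-points} gives $\mu_n\overset{\mathcal N}\to\mu$ for some $\mu\in\mathcal C=\argmin\mathscr F$, which is a solution of \eqref{eq:minimization}. I expect the main obstacle to be the transfer performed in the third step — deducing $\mu_{m_k+1}\overset{\mathcal N}\to\mu^{\ast}$ from $\mu_{n_k}\overset{\mathcal N}\to\mu^{\ast}$ together with $W_2(\mu_{m_k+1},\mu_{n_k})\to0$, and then applying lower semicontinuity; this is the surrogate here for demiclosedness of $\Id-\mathscr J_{\tau_i}$, and it is the place where one must be careful to use narrow (rather than $W_2$) lower semicontinuity of the $\mathscr F_i$, consistently with Theorem~\ref{th:quadratic}. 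A preliminary point worth checking is that the cyclic iteration is well defined, i.e.\ that every iterate remains in the set on which the next proximal map acts.
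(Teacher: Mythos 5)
Your proof is correct, and while it shares the paper's overall skeleton (Fej\'er monotonicity with respect to the common fixed point set, tightness via Lemma~\ref{l:fundamental} and Theorem~\ref{th:Prokhorov} to extract narrow cluster points, the variational inequality \eqref{eq:1} plus narrow lower semicontinuity to identify those cluster points, and Lemma~\ref{l:cluster-points} to upgrade to convergence of the whole sequence), the crucial identification step is done by a genuinely different and in fact cleaner mechanism. The paper never uses asymptotic regularity of the cyclic sequence: instead it fixes, by pigeonhole, an index $j$ hit by infinitely many terms of the chosen subsequence, shows the cluster point minimizes $\mathscr F_j$, and then works backwards through the cycle by extracting nested subsequences of the shifted iterates $\mu_{n_{k(l)}-1}$, proving that their narrow limit $\mu'$ coincides with $\mu$ by keeping the proximity term $W_2^2(\mu_{n_{k(l)}},\mu_{n_{k(l)}-1})$ in \eqref{eq:1}. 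You replace all of this by telescoping the quasi $\tfrac12$-firm inequality (Proposition~\ref{p:proximal-quasi}) against a common fixed point to get $\sum_n W_2^2(\mu_{n+1},\mu_n)<\infty$, hence uniformly small displacement over one cycle, which lets you align, within each block, the iterate produced by $\mathscr J_{\tau_i}$ with the cluster point; the transfer of the narrow limit then rests on the standard fact that a metric for narrow convergence (bounded-Lipschitz/Dudley) is dominated by $W_1\le W_2$. What your route buys is the elimination of the pigeonhole, of the nested sub-subsequences, and of the delicate $\mu=\mu'$ argument, plus the sharper observation that $\argmin\mathscr F=\bigcap_i\argmin\mathscr F_i$ under the common-fixed-point hypothesis; what it costs is the one external metrization fact, which is outside the paper's explicitly stated toolbox but entirely standard (alternatively one can test narrow convergence against bounded Lipschitz functions directly). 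Two caveats you share with the paper, and correctly flag: the iterates must be assumed to stay in the closures of the relevant domains for \eqref{eq:1} and Proposition~\ref{p:proximal-quasi} to apply, and the lower semicontinuity of the $\mathscr F_i$ must be understood in the narrow topology, exactly as the paper itself uses it in this proof and in Theorem~\ref{th:quadratic}.
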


\begin{proof}
Let $\mathscr J\coloneqq \mathscr J_{\tau_N} \circ \mathscr J_{\tau_{N-1}} \circ \cdots \circ \mathscr J_{\tau_1}$. 
Assumption  $\bigcap_{i=1}^N\Fix\mathscr J_{\tau_i}\neq\emptyset$ implies by Lemma \ref{l:intersections} 
that $\Fix\mathscr J =\bigcap_{i=1}^N\Fix\mathscr J_{\tau_i}$. 
Given $\mu_0\in\overline{D(\mathscr F)}$, we define $\mu_{n+1}=\mathscr J_{\tau_{[n]}}(\mu_n)$. 
By Proposition \ref{p:proximal-quasi}, 
the mapping $\mathscr J_{\tau_i}$ is quasi $1/2$-firmly nonexpansive for every $i=1,2,\cdots,N$ 
and in particular $\mathscr J_{\tau_i}$ is quasi nonexpansive for every $i=1,2,\cdots,N$. 
Therefore, we get for every $\nu\in\Fix\mathscr J$ that
$$W_2(\mu_{n+1},\nu)=W_2(\mathscr J_{\tau_{[n]}}(\mu_n),\nu)\leq W_2(\mu_n,\nu).$$
Consequently, the sequence $(\mu_{n})_{n\in\mathbb N}$ is bounded. 
Hence,  by Lemma \ref{l:fundamental} and Prokhorov's Theorem \ref{th:Prokhorov} 
it has a subsequence $(\mu_{n_k})_{k\in\mathbb N}$ narrowly converging to some measure $\mu\in\mathcal P_2(\R^d)$. 
Since there are finitely many indices $i\in\{1,2,\cdots,N\}$, 
we get by the pigeonhole principle that
$\mu_{n_k}=\mathscr J_{\tau_j}(\mu_{n_k-1})$ for infinitely many $k\in\mathbb N$, for some $j\in\{1,2,\cdots,N\}$.
Moreover, by inequality \eqref{eq:1}, we have for all $l\in\mathbb N$ and all $\nu\in D(\mathscr F)$ that
$$\frac{1}{2\tau_j}W^2_2(\mu_{n_{k(l)}},\nu)-\frac{1}{2\tau_j}W^2_2(\mu_{n_{k(l)}-1},\nu)
\leq \mathscr F_j(\nu)-\mathscr F_j(\mu_{n_{k(l)}}).$$
From Fej\'er monotonicity, we get $W_2(\mu_{n_{k(l)}},\nu)\geq W_2(\mu_{n_{{k(l+1)}}-1},\nu)$ 
for any $\nu\in\Fix\mathscr J$ and every $l\in\mathbb N$. 
Then, rearranging terms in the last inequality, yields for all $l\in\mathbb N$ and all $\nu\in \Fix\mathscr J$ that
\begin{equation*}
\label{eq:3}\mathscr F_j(\mu_{n_{k(l)}})
\leq 
\mathscr F_j(\nu)+\frac{1}{2\tau_j}W^2_2(\mu_{n_{k(l-1)}},\nu)-\frac{1}{2\tau_j}W^2_2(\mu_{n_{k(l)}},\nu).
\end{equation*}
Passing to the limit as $l\to+\infty$ and from Fej\'er monotonicity of $(\mu_n)_{n\in\mathbb N}$ with respect to $\Fix\mathscr J$, 
we obtain for all $\nu\in\Fix\mathscr J$ that
$$
\liminf_{l\to+\infty}\mathscr F_j(\mu_{n_{k(l)}})
\leq 
\mathscr F_j(\nu)+\frac{1}{2\tau_j}\lim_{l\to+\infty}\Big(W^2_2(\mu_{n_{k(l-1)}},\nu)-W^2_2(\mu_{n_{k(l)}},\nu)\Big)=\mathscr F_j(\nu).
$$
On the other hand, 
$\Fix\mathscr J\subseteq\Fix\mathscr J_{\tau_j}=\argmin\{\mathscr F_j(\nu):\nu\in\mathcal P_2(\R^d)\}$ 
implies that the last inequality holds for all 
$\nu\in\argmin\{\mathscr F_j(\sigma):\sigma\in\mathcal P_2(\R^d)\}$ 
and so for all $\nu\in\mathcal P_2(\R^d)$.
Narrow lsc of $\mathscr F_j$ gives
$$\mathscr F_j(\mu)\leq \liminf_{l\to+\infty}\mathscr F_j(\mu_{n_{k(l)}})\leq\mathscr F_j(\nu) 
\quad  \text{for all }  \nu\in\mathcal P_2(\R^d),
$$ 
and therefore $\mu\in\argmin\{\mathscr F_j(\nu):\nu\in\mathcal P_2(\R^d)\}$. 
Now consider the sequence $(\mu_{n_{k(l)-1}})_{l\in\mathbb N}$ that 
by construction satisfies $\mu_{n_{k(l)-1}}=\mathscr J_{\tau_{j-1}}(\mu_{n_{k(l)-2}})$. 
Since $(\mu_{n_{k(l)-1}})_{l\in\mathbb N}$ is bounded,
let $\mu_{n_{k(l)-1}}\overset{\mathcal N}\to \mu'\in\mathcal P_2(\R^d)$, 
else by Lemma \ref{l:fundamental} and Prokhorov's Theorem \ref{th:Prokhorov}, 
we can always pass to a subsequence of $(\mu_{n_{k(l)-1}})_{l\in\mathbb N}$ 
with this property. 
By similar arguments as above, we find that the limit 
$\mu'\in\argmin\{\mathscr F_{j-1}(\nu):\nu\in\mathcal P_2(ßR^d)\}$. 
By inequality \eqref{eq:1} we have 
$$\frac{1}{2\tau_j}W^2_2(\mu_{n_{k(l)}},\mu_{n_{k(l)-1}})+\frac{1}{2\tau_j}W^2_2(\mu_{n_{k(l)}},\nu)-\frac{1}{2\tau_j}W^2_2(\mu_{n_{k(l)-1}},\nu)\leq\mathscr F_j(\nu)-\mathscr F_j(\mu_{n_{k(l)}})$$
for all $\nu\in D(\mathscr F)$ and in particular for all $\nu\in\Fix\mathscr J$. 
From narrow lsc of $W(\cdot,\cdot)$ and Fej\'er monotonicity of $(\mu_n)_{n\in\mathbb N}$ with respect to $\Fix\mathscr J$, 
passing to the limit as $l\to+\infty$, 
we obtain
$$\frac{1}{2\tau_j}W^2_2(\mu,\mu')\leq \mathscr F_j(\nu)-\mathscr F_j(\mu) \quad \text{for all } \nu\in\Fix\mathscr J.$$
Since $\Fix\mathscr J\subseteq \argmin\{\mathscr F_j(\nu):\nu\in\mathcal P_2(\R^d)\}$,
the last inequality holds in particular for all 
$\nu\in\argmin\{\mathscr F_j(\nu):\nu\in\mathcal P_2(ßR^d)\}$. 
Therefore $W_2(\mu,\mu')\leq 0$ implies that $\mu=\mu'$. 
This means that $\mu\in\argmin\{\mathscr F_{j-1}(\nu):\nu\in\mathcal P_2(\R^d)\}$. 
Repeating the same argument for every index $i\in\{1,2,\cdots,N\}$ 
yields that $\mu\in\argmin\{\mathscr F_i(\nu):\nu\in\mathcal P_2(\R^d)\}$ for all $i=1,2,\cdots,N$, 
so that
$$
\mu\in\bigcap_{i=1}^N\argmin\{\mathscr F_i(\nu)\,:\,\nu\in\mathcal P_2(\R^d)\}
\subseteq \argmin\{\mathscr F(\nu)\,:\,\nu\in\mathcal P_2(\R^d)\}.
$$ 
Hence we obtain for the original subsequence 
$\mu_{n_k}\overset{\mathcal N}\to\mu\in \bigcap_{i=1}^N\argmin\{\mathscr F_i(\nu):\nu\in\mathcal P_2(\R^d)\}$. 
If $\mu'$ is another narrow cluster point of $(\mu_n)_{n\in\mathbb N}$, 
then, by same arguments, we obtain  
$$\mu'\in \bigcap_{i=1}^N\argmin\{\mathscr F_i(\nu)\,:\,\nu\in\mathcal P_2(\R^d)\}.$$ 
We have that $\bigcap_{i=1}^N\argmin\{\mathscr F_i(\nu)\,:\,\nu\in\mathcal P_2(\R^d)\}=\Fix\mathscr J$ 
and that the sequence $(\mu_n)_{n\in\mathbb N}$ is Fej\'er monotone with respect to $\Fix\mathscr J$. 
Consequently, by Lemma \ref{l:cluster-points}, the whole sequence converges $\mu_n\overset{\mathcal N}\to\mu\in \Fix\mathscr J$. 
This completes the proof.
\end{proof}

Since the our theory  relies on the assumption that the intersection of the fixed points sets of a finite collection of quasi $\alpha$-firmly nonexpansive operators is nonempty, the last results cannot be applied to a situation when this intersection is empty. 
However, inspired by a result of Ba\v cak \cite[Theorem 6.3.7]{Bacak}, 
we can provide a convergence theorem, when the functions $\mathscr F_i$ do not have a common minimizer, 
which essentially is the case when the corresponding proximal mappings $\mathscr J_{\tau_i}$ have no common fixed point. 
However, we need to add two conditions. 
First, each function $\mathscr F_i$ is Lipschitz continuous on $D(\mathscr F_i)$. 
This means that there exists $L_i>0$ such that 
$|\mathscr F_i(\mu)-\mathscr F_i(\nu)|\leq L_i\, W_2(\mu,\nu)$ for all $\mu,\nu\in D(\mathscr F_i)$. 
Second, if $\mathscr J_{i,\tau_{k}}$ is the proximal mapping of $\mathscr F_i$ with step size $\tau_{k}$, 
we require that $(\tau_{k})_{k\in\mathbb N_0}$ satisfies $\sum_{k\in\mathbb N}\tau_{k}=+\infty$ 
and $\sum_{k\in\mathbb N_0}\tau^2_{k}<+\infty$. 
Then we consider the iterations
\begin{equation}
\label{eq:SPPA-2}
\mu_{kN+n+1}=\mathscr J_{[n],\tau_k}(\mu_{kN+n}),\quad  [n]=n\,(\text{mod}\, N)+1\in\{1,2,\cdots, N\},\;k=0,1,2,\cdots.
\end{equation}

\begin{thm}
	\label{th:SPPA-2}
	Let $\mathscr F_i:\mathcal P_2(\R^d)\to(-\infty,+\infty]$ be proper, lsc, coercive and convex along generalized geodesics. Denote by $\mathscr F=\sum_{i=1}^N\mathscr F_i$ and suppose that $\emptyset\neq D(\mathscr F)\subseteq D(\mathscr F_i)$ for $i=1,2,\cdots,N$.
	Assume that $\mathscr F_i$ is Lipschitz continuous on $D(\mathscr F_i)$ for every $i\in\{1,2,\cdots,N\}$. 
	Denote by $\mathscr J_{i,\tau_k}$  the proximal mapping of $\mathscr F_i$ with step size $\tau_{k}>0$ 
	satisfying $\sum_{k\in\mathbb N_0}\tau_{k}=+\infty$ and $\sum_{k\in\mathbb N_0}\tau^2_{k}<+\infty$. 
	Then, for any initial measure $\mu_0\in \overline{D(\mathscr F)}$, 
	the iterates $\mu_{kN+n+1}=\mathscr J_{[n],\tau_k}(\mu_{kN+n})$ 
	converge narrowly to a solution of problem \eqref{eq:minimization}.
\end{thm}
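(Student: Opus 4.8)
The plan is to transfer the diminishing–step subgradient analysis of \cite[Theorem~6.3.7]{Bacak} to $\mathcal P_2(\R^d)$, the three ingredients being the fundamental proximal inequality \eqref{eq:1}, the Lipschitz hypothesis on the $\mathscr F_i$, and Opial's property \eqref{eq:Opial}. Write the iterates as $\mu_m$ with $m=kN+n$, $k\in\N_0$, $n\in\{0,\dots,N-1\}$, so $\mu_{kN+n+1}=\mathscr J_{[n],\tau_k}(\mu_{kN+n})$ with $[n]$ as in \eqref{eq:SPPA-2}, and put $L\coloneqq\max_i L_i$. First I would record two per–step estimates. Since $\mu_{kN+n+1}$ minimizes $\mathscr F_{[n]}(\cdot)+\tfrac{1}{2\tau_k}W_2^2(\cdot,\mu_{kN+n})$, comparing with the competitor $\mu_{kN+n}$ and using Lipschitz continuity gives $\tfrac{1}{2\tau_k}W_2^2(\mu_{kN+n+1},\mu_{kN+n})\le\mathscr F_{[n]}(\mu_{kN+n})-\mathscr F_{[n]}(\mu_{kN+n+1})\le L\,W_2(\mu_{kN+n+1},\mu_{kN+n})$, hence the within–cycle drift obeys $W_2(\mu_{kN+n+1},\mu_{kN+n})\le 2L\tau_k$ and thus $W_2(\mu_{kN+n},\mu_{kN})\le 2LN\tau_k$. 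Plugging any $\nu\in D(\mathscr F)$ into \eqref{eq:1} for $\mathscr J_{[n],\tau_k}$ and multiplying by $2\tau_k$ gives $W_2^2(\mu_{kN+n+1},\nu)-W_2^2(\mu_{kN+n},\nu)\le 2\tau_k\big(\mathscr F_{[n]}(\nu)-\mathscr F_{[n]}(\mu_{kN+n+1})\big)$.

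Next I would sum this over $n=0,\dots,N-1$: since $[n]$ then runs through $\{1,\dots,N\}$ the $\nu$–terms sum to $\mathscr F(\nu)$, and replacing each $\mathscr F_{[n]}(\mu_{kN+n+1})$ by $\mathscr F_{[n]}(\mu_{(k+1)N})$ costs at most $L\,W_2(\mu_{kN+n+1},\mu_{(k+1)N})\le 2L^2N\tau_k$, so I arrive at the one–cycle recursion $W_2^2(\mu_{(k+1)N},\nu)-W_2^2(\mu_{kN},\nu)\le 2\tau_k\big(\mathscr F(\nu)-\mathscr F(\mu_{(k+1)N})\big)+C\tau_k^2$ with $C\coloneqq 4L^2N^2$, valid for all $\nu\in D(\mathscr F)$. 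Since $\mathscr F=\sum_i\mathscr F_i$ is proper, lsc and coercive it attains its infimum (direct method via Lemma~\ref{l:fundamental} and Theorem~\ref{th:Prokhorov}), so I fix $\hat\mu\in\argmin\mathscr F\subseteq D(\mathscr F)$; with $\nu=\hat\mu$ the bracket is nonpositive and $a_k\coloneqq W_2^2(\mu_{kN},\hat\mu)$ satisfies $a_{k+1}\le a_k-2\tau_k\big(\mathscr F(\mu_{(k+1)N})-\min\mathscr F\big)+C\tau_k^2$. As $\sum_k\tau_k^2<\infty$ and $\mathscr F(\mu_{(k+1)N})\ge\min\mathscr F$, the sequence $a_k+C\sum_{j\ge k}\tau_j^2$ is non-increasing, so $\lim_k a_k$ exists for \emph{every} $\hat\mu\in\argmin\mathscr F$ (hence $(\mu_{kN})_k$ is bounded), and summing the recursion gives $\sum_k\tau_k\big(\mathscr F(\mu_{(k+1)N})-\min\mathscr F\big)<\infty$, whence $\liminf_k\mathscr F(\mu_{kN})=\min\mathscr F$ because $\sum_k\tau_k=\infty$. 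Combining this last summability with the Lipschitz climb bound $\mathscr F(\mu_{(k+1)N})\le\mathscr F(\mu_{kN})+2L^2N^2\tau_k$ and a standard upcrossing argument, I would upgrade it to the full limit $\lim_k\mathscr F(\mu_{kN})=\min\mathscr F$.

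Then the narrow convergence follows. By boundedness, Lemma~\ref{l:fundamental} and Theorem~\ref{th:Prokhorov}, $(\mu_{kN})_k$ has narrow cluster points, and by the previous step and narrow lower semicontinuity of $\mathscr F$ every cluster point $\mu^*$ satisfies $\mathscr F(\mu^*)\le\liminf_k\mathscr F(\mu_{kN})=\min\mathscr F$, i.e. $\mu^*\in\argmin\mathscr F$, so $\lim_kW_2(\mu_{kN},\mu^*)$ exists. As in the proof of Lemma~\ref{l:cluster-points} (now in the quasi-Fej\'er setting, which is harmless since $\sum_{j\ge k}\tau_j^2\to0$): if $\mu^*_1\neq\mu^*_2$ were two narrow cluster points, applying Opial's property \eqref{eq:Opial} once along a subsequence narrowly converging to $\mu^*_1$ and once along one converging to $\mu^*_2$, and using that the two $W_2$–limits exist, yields $\lim_kW_2(\mu_{kN},\mu^*_1)<\lim_kW_2(\mu_{kN},\mu^*_2)<\lim_kW_2(\mu_{kN},\mu^*_1)$, a contradiction. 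Hence $\mu_{kN}\overset{\mathcal N}\to\mu^*$ for a single $\mu^*\in\argmin\mathscr F$, i.e. $\mu^*$ solves \eqref{eq:minimization}; finally, since $W_2(\mu_{kN+n},\mu_{kN})\le 2LN\tau_k\to0$ uniformly in $n$ and $\big|\int\varphi\,d\mu_{kN+n}-\int\varphi\,d\mu_{kN}\big|\le\Lip(\varphi)\,W_2(\mu_{kN+n},\mu_{kN})$ for bounded Lipschitz $\varphi$ (which suffice to test narrow convergence), the whole iterate sequence converges narrowly to $\mu^*$.

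The step I expect to be the main obstacle is the upgrade in the second paragraph from $\liminf_k\mathscr F(\mu_{kN})=\min\mathscr F$ (easy, using only $\sum_k\tau_k=\infty$) to the genuine limit $\lim_k\mathscr F(\mu_{kN})=\min\mathscr F$: only the latter forces \emph{every} narrow cluster point to be a minimizer, which is exactly what the Opial uniqueness argument consumes. A secondary nuisance is the bookkeeping of which functional $\mathscr F_i$ is active at which sub-step and the transfer of values between intermediate iterates and cycle endpoints via Lipschitz continuity — together with the implicit requirement that all iterates remain in the domains on which the $\mathscr F_i$ are Lipschitz and the proximal mappings well-defined.
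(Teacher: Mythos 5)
Your proposal is correct and follows essentially the same route as the paper's proof: the Lipschitz per-step drift bound $W_2(\mu_{kN+n+1},\mu_{kN+n})\le 2L\tau_k$, the one-cycle quasi-Fej\'er inequality (the paper's \eqref{eq:almsot-Fejer}, which anchors the function value at $\mathscr F(\mu_{kN})$ where you anchor at $\mathscr F(\mu_{(k+1)N})$ -- an immaterial difference), boundedness and summability of $\tau_k(\mathscr F(\mu_{kN})-\min\mathscr F)$ (the paper invokes Ba\v cak's Exercise 6.5 where you argue the quasi-Fej\'er lemma directly), identification of narrow cluster points as minimizers via narrow lsc, Opial's property \eqref{eq:Opial-sup} for uniqueness of the cluster point, and the $O(\tau_k)$ within-cycle drift to pass from $(\mu_{kN})_k$ to the full iterate sequence. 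The only genuine divergences are in how two sub-steps are finished, and both of yours are arguably tighter: you upgrade $\liminf_k\mathscr F(\mu_{kN})=\min\mathscr F$ to a true limit by the upcrossing argument (using the climb bound $\mathscr F(\mu_{(k+1)N})\le\mathscr F(\mu_{kN})+2L^2N^2\tau_k$), so that \emph{every} narrow cluster point is a minimizer and the Opial contradiction can be run symmetrically with two existing $W_2$-limits; the paper instead passes to a subsequence along which the values converge and then runs a more delicate $\varepsilon$-bookkeeping quasi-Fej\'er/Opial argument in which the ``w.l.o.g.'' swap implicitly needs the second cluster point to lie in $\argmin\mathscr F$ as well -- your version closes exactly that loose end. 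Likewise, you conclude whole-sequence narrow convergence by testing against bounded Lipschitz functions via $W_2(\mu_{kN+n},\mu_{kN})\le 2LN\tau_k\to 0$, whereas the paper extracts a narrow limit $\mu_i$ for each residue class $(\mu_{kN+i})_k$ and identifies $\mu_i=\mu$ through narrow lsc of $W_2$; both are valid. The domain bookkeeping you flag as a nuisance (whether $\mu_{kN}\in D(\mathscr F_i)$ so that the Lipschitz bounds apply) is glossed over in the paper in exactly the same way, so you are not missing anything the paper supplies.
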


The proof follows similar steps as in \cite[Theorem 6.3.7]{Bacak}.

\begin{proof}
	First, we get from inequality \eqref{eq:1} for each $i\in\{1,2,\cdots,N\}$ that 
	$$W^2_2(\mu_{kN+i},\nu)\leq W^2_2(\mu_{kN+i-1},\nu)-2\tau_{k}(\mathscr F_i(\mu_{kN+i})-\mathscr F_i(\nu))
	\quad \text{for all } \nu\in D(\mathscr F).$$
	Summing on the both sides of this inequality yields
	\begin{align}
	W^2_2(\mu_{kN+N},\nu)&\leq W^2_2(\mu_{kN},\nu)-2\tau_{k}\sum_{i=1}^N\mathscr{F}_i(\mu_{kN+i})+2\tau_k\mathscr{F}(\nu)\\
	&=W^2_2(\mu_{kN},\nu)-2\tau_k(\mathscr F(\mu_{kN})-\mathscr F(\nu))
	+2\tau_k\mathscr F(\mu_{kN})-2\tau_{k}\sum_{i=1}^N\mathscr{F}_i(\mu_{kN+i}). \label{xyz}
	\end{align}
The assumption that $\mathscr F_i$ is Lipschitz continuous on $D(\mathscr F_i)$ and hence on $D(\mathscr F)$ for every $i\in\{1,2,\cdots,N\}$ 
implies for all $k\in\mathbb N_0$ that
$$
\mathscr F(\mu_{kN})-\sum_{i=1}^N\mathscr{F}_i(\mu_{kN+i})
=
\sum_{i=1}^N(\mathscr{F}_i(\mu_{kN})-\mathscr{F}_i(\mu_{kN+i}))\leq \sum_{i=1}^NL_i\,W_2(\mu_{kN},\mu_{kN+i}).
$$
By definition of the proximum we have for all $i\in\{1,2,\cdots,N\}$ and all $k\in\mathbb N_0$ that
$$
\mathscr F_i(\mu_{kN+i})+\frac{1}{2\tau_k}\,W^2_2(\mu_{kN+i-1},\mu_{kN+i})\leq \mathscr F_i(\mu_{kN+i-1}).
$$
This implies 
$$W_2(\mu_{kN+i-1},\mu_{kN+i})\leq2\tau_k\frac{\mathscr F_i(\mu_{kN+i-1})-\mathscr F_i(\mu_{kN+i})}{W_2(\mu_{kN+i-1},\mu_{kN+i})}
\leq 2\tau_k\,L_i.$$
This upper estimate together with an iterative application of the triangle inequality to the expression 
$W_2(\mu_{kN},\mu_{kN+i})\leq W_2(\mu_{kN},\mu_{kN+1})+\cdots+W_2(\mu_{kN+i-1},\mu_{kN+i})$ yields
$$
\sum_{i=1}^N\mathscr{F}_i(\mu_{kN})-\mathscr{F}_i(\mu_{kN+i})
\leq 
2\tau_k  \sum_{i=1}^NL_i\sum_{j=1}^iL_j\leq \tau_k L^2_{\max} N(N+1),
$$
where $L_{\max}\coloneqq\max\{L_i:i=1,2,\cdots,N\}$. 
Therefore, we obtain for all $\nu\in D(\mathscr F)$ the inequality 
\begin{equation} \label{eq:almsot-Fejer}
W^2_2(\mu_{kN+N},\nu)\leq W^2_2(\mu_{kN},\nu)-2\tau_k(\mathscr F(\mu_{kN})-\mathscr F(\nu))+2\tau^2_k\,L^2_{\max}
N(N+1).
\end{equation}
In particular, \eqref{eq:almsot-Fejer} holds if $\nu\in\argmin\{\mathscr F(\sigma):\sigma\in\mathcal P_2(\R^d)\}$. 
Applying \cite[Exercise 6.5]{Bacak} with 
$a_k\coloneqq W^2_2(\mu_{kN},\nu)$, $b_k\coloneqq \mathscr F(\mu_{kN})-\mathscr F(\nu)$ 
and $c_k\coloneqq2\tau^2_k\,L^2_{\max}N(N+1)$ 
yields that the sequence $(W_2(\mu_{kN},\nu))_{k\in\mathbb N_0}$ 
converges to a certain number $d(\nu)\geq 0$. 
In particular, the sequence $(\mu_{kN})_{k\in\mathbb N_0}$ is bounded.
By Lemma \ref{l:fundamental} and Prokhorov's Theorem \ref{th:Prokhorov}, 
there is a subsequence $\mu_{k_jN}\overset{\mathcal N}\to\mu$ for some $\mu\in\mathcal P_2(\R^d)$. 
On the other hand, again by \cite[Exercise 6.5]{Bacak}, 
it holds that 
$$\sum_{k\in\mathbb N_0}\tau_k(\mathscr F(\mu_{kN})-\mathscr F(\nu))<+\infty\quad \text{for all } 
\nu\in\argmin\{\mathscr F(\sigma):\sigma\in\mathcal P_2(\R^d)\}.
$$ 
Therefore $\lim_{k\to+\infty}\tau_k(\mathscr F(\mu_{kN})-\mathscr F(\nu))=0$ 
implies that $\lim_{k\to+\infty}\mathscr F(\mu_{kN})=\mathscr F(\nu)$, 
else we can always pass to a subsequence of $(\mu_{kN})_{k\in\mathbb N_0}$ having this property.
By narrow lsc of $\mathscr F$ we get that 
$$\mathscr F(\mu)\leq\liminf_{j\to+\infty}\mathscr F(\mu_{k_jN})\leq\limsup_{k\to+\infty}\mathscr F(\mu_{kN})=\mathscr F(\nu) 
$$
for all 
$\nu\in\argmin\{\mathscr F(\sigma):\sigma\in\mathcal P_2(\R^d)\}$.

Thus, 
$\mu \in \argmin \{ \mathscr F(\sigma): \sigma \in \mathcal P_2(\R^d) \}$. 
Let $(\mu_{k_mN})_{m\in\mathbb N}$ be another narrowly convergent subsequence of $(\mu_{kN})_{k\in\mathbb N_0}$. 
Let $\mu_{k_mN}\overset{\mathcal N}\to\mu'\in\mathcal P_2(\R^d)$. 
Note that \eqref{eq:almsot-Fejer} acts as a substitute in the argument of Lemma \ref{l:cluster-points} 
for Fej\'er monotonicity of $(\mu_{kN})_{k\in\mathbb N_0}$ with respect to 
$\argmin\{\mathscr F(\sigma):\sigma\in\mathcal P_2(\R^d) \}$. 
Indeed, let $r_1=\limsup_{j\to+\infty}W_2(\mu_{k_jN},\mu)$ 
and 
$r_2=\limsup_{m\to+\infty}W_2(\mu_{k_mN},\mu')$. 
Suppose w.l.o.g. that $r_1\leq r_2$. 
From Opial's property \eqref{eq:Opial-sup} 
it follows that $r_2<\limsup_{m\to+\infty}W_2(\mu_{k_mN},\mu)$. 
For every $\varepsilon>0$, there is $j_0\in\mathbb N$ such that $W_2(\mu_{k_jN},\mu)<r_1+\varepsilon$, 
whenever $j\geq j_0$. 
In \eqref{eq:almsot-Fejer}, let $\varepsilon^2_k\coloneqq 2\tau_k^2 L^2_{\max}N(N+1)$. 
Then we have 
$W^2_2(\mu_{k_mN},\mu)<(r_1+\varepsilon)^2+\sum_{l=k_{j_0}}^{k_m}\varepsilon^2_l$
whenever $k_m\geq k_{j_0}$.  
For a fixed difference $\Delta(m,j_0)=k_m-k_{j_0}$, we let $j_0\to+\infty$, i.e., also $m\to+\infty$. 
Since $\varepsilon_l\to 0$ and the sum $\sum_{l=k_{j_0}}^{k_m}\varepsilon^2_l$ 
is finite, we get  for sufficiently large $j_0$ and sufficiently large $m$ 
that $W_2(\mu_{k_mN},\mu)<r_1+2\varepsilon$. 
Therefore there exists $m_1\in\mathbb N$ such that $W_2(\mu_{k_mN},\mu)<r_2+2\varepsilon$, 
whenever $m\geq m_1$, implying $\limsup_{m\to+\infty}W_2(\mu_{k_mN},\mu)\leq r_2$. 
This would raise a contradiction. 
Therefore the sequence $(\mu_{kN})_{k\in\mathbb N_0}$ must have a unique narrow cluster point. 
Following exactly the same arguments as in Lemma \ref{l:cluster-points}, 
we get that the whole sequence converges $\mu_{kN}\overset{\mathcal N}\to\mu$. 
Now consider $(\mu_{kN+i})_{k\in\mathbb N_0}$ for $i=1,2,\cdots, N-1$. 
Repeating the same reasoning as for $(\mu_{kN})_{k\in\mathbb N_0}$, 
we conclude that 
$\mu_{kN+i}\overset{\mathcal N}\to \mu_i\in\argmin\{\mathscr F(\sigma):\sigma\in\mathcal P_2(\R^d) \}$ 
for each $i=1,2,\cdots,N-1$. 
From the estimate $W_2(\mu_{kN},\mu_{kN+i})\leq 2\tau_k\sum_{j=1}^iL_j$ 
and narrow lsc of $W_2(\cdot,\cdot)$, 
we obtain that 
$$0\leq W_2(\mu,\mu_i)\leq\liminf_{k\to+\infty}W_2(\mu_{kN},\mu_{kN+i})\leq\lim_{k\to+\infty}(2\tau_k\sum_{j=1}^iL_j)=0.$$ 
Hence $\mu=\mu_i$ for every $i=1,2,\cdots, N-1$. 
This means that the whole sequence of iterates $\mu_{kN+n+1}=\mathscr J_{[n],\tau_k}(\mu_{kN+n})$ 
converges narrowly to $\mu\in\argmin\{\mathscr F(\sigma)\,:\,\sigma\in\mathcal P_2(\R^d)\}$.
\end{proof}

\paragraph{Acknowledgment:}
This research was supported by DFG under Germany's Excellence Strategy – The Berlin Mathematics Research Center MATH+ (EXC-2046/1,  Projektnummer:  390685689).

\bibliographystyle{abbrv}
\bibliography{references}

\end{document}